\lstdefinelanguage{Sage}[]{Python}
{morekeywords={False,sage,True},sensitive=true}
\definecolor{dblackcolor}{rgb}{0.0,0.0,0.0}
\definecolor{dbluecolor}{rgb}{0.01,0.02,0.7}
\definecolor{dgreencolor}{rgb}{0.2,0.4,0.0}
\definecolor{dgraycolor}{rgb}{0.30,0.3,0.30}
\theoremstyle{plain}
\newtheorem{theorem}{Theorem}[section]
\newtheorem{proposition}[theorem]{Proposition}
\newtheorem{lemma}[theorem]{Lemma}
\theoremstyle{definition}
\newtheorem{remark}[theorem]{Remark}
\newtheorem{question}[theorem]{Question}
\theoremstyle{remark}
\numberwithin{equation}{section}
\newcommand{\N}{\mathbb N}
\newcommand{\Z}{\mathbb Z}
\newcommand{\R}{\mathbb R}
\newcommand{\C}{\mathbb C}
\newcommand{\Hy}{\mathbb H}
\newcommand{\fa}{\mathfrak a}
\newcommand{\fg}{\mathfrak g}
\newcommand{\fh}{\mathfrak h}
\newcommand{\fk}{\mathfrak k}
\newcommand{\fm}{\mathfrak m}
\newcommand{\fp}{\mathfrak p}
\newcommand{\fq}{\mathfrak q}
\newcommand{\ft}{\mathfrak t}
\DeclareMathOperator{\GL}{GL}
\DeclareMathOperator{\SO}{SO}
\DeclareMathOperator{\SU}{SU}
\DeclareMathOperator{\Sp}{Sp}
\DeclareMathOperator{\Ut}{U}
\DeclareMathOperator{\Spin}{Spin}
\newcommand{\gl}{\mathfrak{gl}}
\newcommand{\sll}{\mathfrak{sl}}
\newcommand{\so}{\mathfrak{so}}
\newcommand{\op}{\operatorname}
\newcommand{\Id}{\textup{Id}}
\newcommand{\mi}{\mathrm{i}}
\DeclareMathOperator{\tr}{Tr}
\DeclareMathOperator{\diag}{diag}
\DeclareMathOperator{\Span}{Span}
\newcommand{\innerdots}{\langle {\cdot},{\cdot}\rangle }
\newcommand{\ee}{\varepsilon}
\DeclareMathOperator{\Hom}{Hom}
\DeclareMathOperator{\Ad}{Ad}
\DeclareMathOperator{\Cas}{Cas}
\DeclareMathOperator{\Spec}{Spec}
\DeclareMathOperator{\scal}{scal}
\DeclareMathOperator{\Rc}{Rc}
\DeclareMathOperator{\vol}{vol}
\newcommand{\kil}{\textup{B}}
\newcommand{\PP}{\mathcal{P}}
\newcommand{\Scomplex}[1][(n+1)]{\op{S}_{\C}(\R^{2{#1}})}
\newcommand{\Squaternionic}[1][(n+1)]{\op{S}_{\Hy}(\C^{2{#1}})}
\newcommand{\Grass}[2]{\op{Gr}_{{#1}}(\R^{{#2}})}
\title{Spectrally distinguishing symmetric spaces II}
\author{Emilio~A.~Lauret}
\address{Instituto de Matemática (INMABB), Departamento de Matemática, Universidad Nacional del Sur (UNS)-CONICET, Bahía Blanca, Argentina.}
\email{emilio.lauret@uns.edu.ar}
\author{Juan~Sebastián~Rodríguez}
\address{Departamento de Matemáticas, Pontificia Universidad Javeriana, Bogotá, Colombia.}
\email{js.rodriguez@javeriana.edu.co}
\subjclass[2020]{Primary: 58J53. Secondary: 53C30, 58C40.}
\keywords{Isospectrality, first eigenvalue, homogeneous metric, symmetric space, $nu$-stability}
\date{\today}
\begin{document}

\begin{abstract}
The action of the subgroup $\operatorname{G}_2$ of $\operatorname{SO}(7)$ (resp.\ $\operatorname{Spin}(7)$ of $\operatorname{SO}(8)$) on the Grassmannian space $M=\frac{\operatorname{SO}(7)}{\operatorname{SO}(5)\times\operatorname{SO}(2)}$ (resp.\ $M=\frac{\operatorname{SO}(8)}{\operatorname{SO}(5)\times\operatorname{SO}(3)}$) is still transitive. 
We prove that the spectrum (i.e.\ the collection of eigenvalues of its Laplace-Beltrami operator) of a symmetric metric $g_0$ on $M$ coincides with the spectrum of a $\operatorname{G}_2$-invariant (resp.\ $\operatorname{Spin}(7)$-invariant) metric $g$ on $M$ only if $g_0$ and $g$ are isometric.  

As a consequence, each non-flat compact irreducible symmetric space of non-group type is spectrally unique among the family of all currently known homogeneous metrics on its underlying differentiable manifold. 
\end{abstract} 

\maketitle

%\tableofcontents

\section{Introduction}

Any Riemannian manifold $(M,g)$ has naturally associated a distinguished second order differential operator called the Laplace-Beltrami operator $\Delta_g$. 
When $M$ is compact, its spectrum $\Spec(M,g):=\Spec(\Delta_g)$ is real, non-negative and discrete. 
Two compact Riemannian manifolds $(M_1,g_1)$ and $(M_2,g_2)$ are called \emph{isospectral} if $\Spec(M_1,g_1)=\Spec(M_2,g_2)$. 

It is expected that compact Riemannian manifolds with distinguished geometrical properties are spectrally unique, that is, any isospectral Riemannian manifold is necessarily isometric. 
In this article we consider the compact symmetric spaces as geometrically distinguished manifolds. 
Since this problem is still very difficult (e.g.\ it is not known whether a round sphere of dimension at least $7$ is spectrally unique among orientable Riemannian manifolds), we restrict the family to compact homogeneous Riemannian manifolds. 
We refer to the first part of this series, \cite{LauretRodriguez-hearingsymm1}, for a recent account of previous results on this subject. 

This article focuses in the following particular and natural question: 
\begin{question}\label{question}
Is any non-flat compact irreducible symmetric space $(M,g)$ spectrally unique within the space of homogeneous Riemannian metrics on $M$?
\end{question}
The non-flat assumption is due to the existence of isospectral and non-isometric flat tori. 
We next summarize partial answers to Question~\ref{question}.
The cases of compact rank one symmetric space were solved in \cite{BLPhomospheres}
(see \cite{SchmidtSutton13, Lauret-SpecSU(2), LinSchmidtSuttonII} for the particular cases of $S^3$ and $P^3(\R)$).

The cases $M=K\cong\frac{K\times K}{\diag(K)}$, with $K$ a compact simple Lie group, has the great difficulty that in most cases it is not know whether the space of left-invariant metrics $\mathcal M_{\text{left}}(K)$ includes all homogeneous Riemannian metrics on $K$. 
Moreover, the problem is already difficult restricted to the family $\mathcal M_{\text{left}}(K)$. 
Gordon, Schueth and Sutton~\cite{GordonSchuethSutton10} proved that any symmetric (i.e.\ bi-invariant) metric $g_0$ on $K$ is spectrally isolated within $\mathcal M_{\text{left}}(K)$, that is, there is a neighborhood $V$ around $g_0$ such that no metric in $V\smallsetminus\{g_0\}$ is isospectral to $g_0$. 
Furthermore, the only particular cases fully solved are $\SU(2)\simeq S^3$, $\SO(3)\simeq P^3(\R)$, and $\Sp(n)$ for any $n\geq1$ by \cite{Lauret-globalrigid}.  

The remaining cases are compact irreducible symmetric spaces of rank $\geq2$ of non-group type (i.e.\ it has a symmetric presentation $\bar G/\bar K$ satisfying that $\bar G$ is simple). 
Again, the space of homogeneous metrics on the corresponding underlying differentiable manifold is not classified for most of cases. 
The only cases that we know they admit homogeneous metrics are the following:
\begin{equation}\label{eq:espaciossimetricos}
\begin{aligned}
\Scomplex[n] &:= \frac{\SO(2n)}{\Ut(n)}
	\cong \frac{\SO(2n-1)}{\Ut(n-1)},\qquad & 
\Grass{2}{7} &:= \frac{\SO(7)}{\SO(2)\times\SO(5)}
	\cong \frac{\op{G}_2}{\Ut(2)},
\\
\Squaternionic[n] &:= \frac{\SU(2n)}{\Sp(n)}
	\cong \frac{\SU(2n-1)}{\Sp(n-1)},& 
\Grass{3}{8} &:= \frac{\SO(8)}{\SO(3)\times\SO(5)}
	\cong \frac{\Spin(7)}{\SO(4)}. 
\end{aligned}
\end{equation}
The two at the left are the space of orthogonal complex structures on $\R^{2n}$ and the space of quaternionic structures on $\C^{2n}$ compatible with the Hermitian metric, which are of type DIII and AII respectively. 
The two at the right are the Grassmannians of oriented real $2$-dimensional subspaces of $\R^7$ and real $3$-dimensional subspaces of $\R^8$, both of type BDI. 

The first presentation $\bar G/\bar K$ for each case in \eqref{eq:espaciossimetricos} is the symmetric presentation, satisfying at the Lie algebra level that $\bar \fg=\bar\fk\oplus\bar\fm$ and $[\bar\fm,\bar\fm]\subset\fk$. 
The second presentation $G/H$ allows us to describe the currently know homogeneous metrics on each of them, as the space of $G$-invariant metrics on $G/H$. 
In each case, $G$ is a subgroup of $\bar G$ such that its action on $\bar G/\bar K$ is still transitive. 
These metrics were discovered by Onishchik~\cite{Onishchik66-inclusion}.

Question~\ref{question} for the symmetric spaces at the left in \eqref{eq:espaciossimetricos} has been answered affirmatively in \cite{LauretRodriguez-hearingsymm1}. 
The main goal of this article is to complete the cases at the right hand side in \eqref{eq:espaciossimetricos}, to obtain the following main result.

\begin{theorem}\label{thm0:main}
Let $G/H$ denote the second presentation of the symmetric space $M=\bar G/\bar K$ of any of the cases in \eqref{eq:espaciossimetricos}. 
If a $G$-invariant metric on $M$ is isospectral to a symmetric metric on $M$, then they are isometric. 
\end{theorem}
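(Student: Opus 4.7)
The plan is to settle the two remaining cases in \eqref{eq:espaciossimetricos}, namely $M=\Grass{2}{7}$ with second presentation $\op{G}_2/\Ut(2)$ and $M=\Grass{3}{8}$ with second presentation $\Spin(7)/\SO(4)$, since the two cases at the left of \eqref{eq:espaciossimetricos} are already handled in Part~I~\cite{LauretRodriguez-hearingsymm1}.

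First I would parametrize the $G$-invariant metrics on $G/H$ in the standard way. Fix a reductive complement $\fm$ to $\fh$ in $\fg$, decompose it into $\Ad(H)$-irreducible summands $\fm=\fm_1\oplus\cdots\oplus\fm_r$, and verify the (expected) multiplicity-freeness for both pairs $(G,H)$. Then the $\Ad(H)$-invariant inner products on $\fm$, hence the $G$-invariant metrics on $M$, are parametrized by $r$-tuples of positive reals $\mathbf{x}=(x_1,\dots,x_r)$, with the symmetric metric $g_0$ occupying a distinguished point $\mathbf{x}_0$ read off from the inclusion $G\hookrightarrow\bar G$.

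Next, by Peter--Weyl and Frobenius reciprocity, for each $\pi\in\hat G$ with $V_\pi^H\neq 0$ the operator $\Delta_g$ preserves the $\pi$-isotypic component of $L^2(G/H)$ and acts there as an endomorphism $A_\pi(\mathbf{x})$ depending rationally on $\mathbf{x}$, built from the Casimir $\Cas_G$ and the isotropy action on $\fm$. For the symmetric $g_0$ the endomorphism $A_\pi(\mathbf{x}_0)$ is scalar, equal (up to a uniform normalization) to the $G$-Casimir eigenvalue of $\pi$, whereas for generic $\mathbf{x}$ it is not. Using the branching rules $\op{G}_2\downarrow\Ut(2)$ and $\Spin(7)\downarrow\SO(4)$, I would enumerate the representations $\pi$ of smallest Casimir admitting $H$-fixed vectors and compute the corresponding low eigenvalues of $\Delta_g$ and their multiplicities as explicit rational functions of $\mathbf{x}$. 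Combining this with the heat-coefficient equalities $\vol(M,g)=\vol(M,g_0)$ and $\int_M\scal_g\,dv_g=\int_M\scal_{g_0}\,dv_g$ (a monomial and a rational equation in $\mathbf{x}$ respectively) produces a polynomial system that $\mathbf{x}$ must satisfy whenever $(M,g)$ is isospectral to $(M,g_0)$.

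The main obstacle, and the technical heart of the argument, is to show that this polynomial system admits $\mathbf{x}_0$ as its only positive solution rather than merely as an isolated one. Here the $\nu$-stability of the symmetric Einstein metric $g_0$, listed among the paper's keywords, is expected to play the decisive role, upgrading infinitesimal rigidity at $\mathbf{x}_0$ to global uniqueness over $(0,\infty)^r$. The $\Spin(7)/\SO(4)$ case, with its larger parameter space, is likely where the verification becomes most delicate, whereas the lower-dimensional $\op{G}_2/\Ut(2)$ case should already be amenable by a direct case analysis following the pattern established in Part~I.
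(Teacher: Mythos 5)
Your skeleton---parametrize the $G$-invariant metrics by the sizes $\mathbf{x}$ of the irreducible isotropy summands, compute low Laplace eigenvalues via branching and Casimir scalars, and close with heat-invariant identities---matches the paper's strategy up to a point. But there are two substantive gaps.

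First, you never explain how, for a \emph{given} isospectral pair, you know \emph{which} representations $\pi$ of $G$ contribute the first eigenvalue $\bar\lambda_1$ of the symmetric metric to $\Spec(M,g_{\mathbf{x}})$. This is the linchpin of the argument: the multiplicity of $\bar\lambda_1$ for the symmetric metric is $21$ for $\Grass{2}{7}$ and $56$ for $\Grass{3}{8}$, and isospectrality forces this multiplicity to be reproduced by $g_{\mathbf{x}}$. Since the dimensions of the few low-dimensional irreducibles of $\op{G}_2$ (resp.\ $\Spin(7)$) are sparse, the only way to write $21=\sum_\pi d_\pi a_\pi$ (resp.\ $56=\sum_\pi d_\pi a_\pi$ with $0\le a_\pi\le \dim V_\pi^H$) pins down exactly which two representations must each contribute $\bar\lambda_1$. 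That arithmetic is what turns the eigenvalue data into two equations on $\mathbf{x}$; without it you do not know that the low-Casimir representations furnish matching eigenvalues for the non-symmetric metric.

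Second, the invocation of $\nu$-stability to ``upgrade infinitesimal rigidity at $\mathbf{x}_0$ to global uniqueness'' is a misreading: $\nu$-stability plays no role in the paper's proof of the theorem. It appears in the abstract and keywords because Subsections~\ref{subsec3:nu-stability} and \ref{subsec4:nu-stability} establish, as a separate \emph{application}, that certain non-symmetric $G$-invariant Einstein metrics on these spaces are $\nu$-unstable. There is no known mechanism by which $\nu$-stability at one metric forces a spectral rigidity statement across the whole moduli space. What the paper actually does is elementary: the two eigenvalue equations from the multiplicity argument plus the volume equation $r_1^{\dim\fp_1}r_2^{\dim\fp_2}r_3^{\dim\fp_3}=\text{const}$ reduce to a single real polynomial in one variable (a quintic, resp.\ a quintic-type expression $x(20-x)^2=2^9$), whose positive roots are analyzed directly; extraneous positive roots are excluded because they would force a negative $r_i^2$. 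You also propose the total-scalar-curvature heat coefficient, which the paper does not need once the multiplicity bookkeeping is in place.
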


It is highly expect that any homogeneous and non-symmetric metric on the underlying differentiable manifold of a compact irreducible symmetric spaces of non-group type and rank $\geq2$ is isometric to a $G$-invariant metric of $G/H$ as in \eqref{eq:espaciossimetricos} (see e.g.\ \cite[Rem.~1.3]{Kerr96}). 
If this is the case, then Theorem~\ref{thm0:main} combined with \cite{BLPhomospheres} imply that the answer to  Question~\ref{question} is affirmative for all compact irreducible symmetric spaces of non-group type of arbitrary rank.

We next move to an application. 
Cao, Hamilton and Ilmanen (see \cite[Thm.~1.1]{CaoHe15}) proved that a compact Einstein manifold $(M,g)$ is $\nu$-unstable if
\begin{equation}
\lambda_1(M,g)<2E,
\end{equation} 
where $E$ is the Einstein constant of $(M,g)$ (i.e.\ $\Rc(g)=Eg$) and $\lambda_1(M,g)$ denotes the smallest positive eigenvalues of the Laplace-Beltrami operator of $(M,g)$. 
Furthermore, Kröncke~\cite[Thm.~1.3]{Kroencke15} proved that a $\nu$-unstable Einstein manifold of positive scalar curvature is necessarily dynamically unstable. 
See \cite{Kroencke15} for their definitions. 

There are (up to positive scalars) two non-symmetric $\op{G}_2$-invariant Einstein metrics on the space $\Grass{2}{7}$. 
We proved in Subsection~\ref{subsec3:nu-stability} that one of them is $\nu$-unstable. 
Similarly, we prove in Subsection~\ref{subsec4:nu-stability} that the non-symmetric $\Spin(7)$-invariant Einstein metrics on $\Grass{3}{8}$ are $\nu$-unstable.

\subsection*{Acknowledgments}
The authors wish to express their gratitude to the referee for a thorough reading and several detailed corrections that have improved the article. 
Also, they are indebted to Ilka Agricola, Marcos Salvai, and Wolfang Ziller, for providing references and helpful comments concerning the embedded complex simple Lie subalgebra of type $G_2$ in $\so(7,\C)$, and to Fiorela Rossi Bertone for her collaboration in proving several small Lie theoretical statements in Section~\ref{sec:case4}. 

The first-named author was supported by grants from FONCyT (PICT-2019-01054), CONICET (PIP 11220210100343CO) and SGCYT--UNS. 
The second-named author was supported by Pontificia Universidad Javeriana through the research project ID 20500 of the call for postdoctoral positions.

\section{Preliminaries} \label{sec:preliminaries} 

Almost all preliminaries contents necessary for this article are in \cite[\S2]{LauretRodriguez-hearingsymm1}. 
We begin this section by summarizing them.

Let $G$ be a compact Lie group and $H\subset G$ a closed subgroup, with Lie algebras $\fg$ and $\fh$ respectively. 
Let $\fp$ be an $\Ad(H)$-invariant complement of $\fh$ in $\fg$, thus $\fg=\fh\oplus\fp$ and $[\fh,\fp]\subset\fp$.

We fix a $G$-invariant metric $g$ on $G/H$ associated with an $\Ad(H)$-invariant metric $\innerdots$ on $\fp$.  
The spectrum of the Laplace-Beltrami operator $\Delta_g$ associated to $(G/H,g)$ is given by 
\begin{equation}\label{eq:spec}
\Spec(G/H, g) :=\Spec(\Delta_g) 
= \bigcup_{\pi \in\widehat G} 
\Big\{\!\!\Big\{ 
	\underbrace{\lambda_j^{\pi}(g),\dots, \lambda_j^{\pi}(g)}_{d_\pi\text{\rm-times}}: 1\leq j\leq d_\pi^H
\Big\}\!\!\Big\},
\end{equation}
where $\widehat G$ is the set of equivalence classes of irreducible representations of $G$, 
$d_\pi=\dim V_\pi$, $d_\pi^H= \dim V_\pi^H$, and $\lambda_1^{\pi}(g), \dots,\lambda_{d_\pi^H}^{\pi}(g)$ are the eigenvalues of the self-adjoint linear endomorphism
\begin{equation}\label{eq:pi(-C_g)}
\pi(-C_g)|_{V_\pi^H}:= 
-\textstyle{\sum\limits_{i=1}^{\dim\fp} }\pi(X_i)^2 
\Big|_{V_{\pi}^H} 
: V_{\pi}^H\longrightarrow V_{\pi}^H,
\end{equation} 
where $\{X_1,\dots,X_{\dim\fp}\}$ is any orthonormal basis of $\fp$ with respect to $\innerdots$ and $C_g=\sum_{i=1}^n X_i^2$. 
The operator $\pi(-C_g)|_{V_\pi^H}$ is uniquely determined by $g$, though the element $C_g$, which lies in the universal enveloping algebra $\mathcal U(\fg)$, is not well defined; see \cite[Rem.~2.2]{LauretRodriguez-hearingsymm1}.

Let $\kil$ be an $\Ad(G)$-invariant inner product on $\fg$. 
We next introduce strong hypotheses that will hold in the next two sections. 
We assume that there exist a closed subgroup $K$ of $G$, with Lie algebra $\fk$, such that $H\subset K\subset G$.
Moreover, we also assume that there are $H$-invariant subspaces $\fp_1,\fp_2,\fp_3$ of $\fp$ such that $\fp=\fp_1\oplus\fp_2\oplus\fp_3$, 
\begin{align}\label{eq:hipotesis}
\fk&=\fh\oplus\fp_3, &
\fg&=\fk\oplus (\fp_1\oplus\fp_2),&
&\text{ with $\fp_1\oplus\fp_2$ invariant by $K$. }
\end{align}

For any $r=(r_1,r_2,r_3)\in\R_{>0}^3$, we set
\begin{equation}
\innerdots_r 
= \frac{1}{r_1^2} \kil|_{\fp_1} \oplus 
  \frac{1}{r_2^2} \kil|_{\fp_2} \oplus 
  \frac{1}{r_3^2} \kil|_{\fp_3}
.
\end{equation}
It follows that the inner product $\innerdots_r$ is $\Ad(H)$-invariant, so it induces a $G$-invariant metric $g_r$ on $G/H$.

We fix $r\in\R_{>0}^3$, $\pi\in\widehat G$ and $v\in V_\pi^H$. 
For each $h=1,2,3$, let $\{X_1^{(h)},\dots,X_{p_h}^{(h)}\}$ be an orthonormal basis of $\fp_h$ with respect to $\kil|_{\fp_h}$ ($p_h=\dim \fp_h$).
It follows that $\bigcup_{h=1}^3 \{r_h X_1^{(h)},\dots,r_h X_{p_h}^{(h)}\}$ is an orthonormal basis of $\fp$ with respect $\innerdots_r$ and therefore 
\begin{equation}\label{eq:pi(C_r)}
\begin{aligned}
\pi(-C_{g_r})\cdot v &
=-\sum_{i_1=1}^{p_1} r_1^2\,  \pi(X_{i_1}^{(1)})^2\cdot v
 -\sum_{i_2=1}^{p_2} r_2^2\,  \pi(X_{i_2}^{(2)})^2\cdot v
 -\sum_{i_3=1}^{p_3} r_3^2\,  \pi(X_{i_3}^{(3)})^2\cdot v
\\ & 
= r_1^2\, \left(
	-\sum_{i_1=1}^{p_1} \pi(X_{i_1}^{(1)})^2 
	-\sum_{i_2=1}^{p_2} \pi(X_{i_2}^{(2)})^2 
	-\sum_{i_3=1}^{p_3} \pi(X_{i_3}^{(3)})^2\right) \cdot v
\\ &\quad 
- (r_2^2-r_1^2) \sum_{i_2=1}^{p_2} \pi(X_{i_2}^{(2)})^2 \cdot v
+ (r_3^2-r_1^2) \left( -\sum_{i_3=1}^{p_3} \pi(X_{i_3}^{(3)})^2 -\sum_{j=1}^{\dim\fh} \pi(Z_j^2)\right) \cdot v
\\ & 
= r_1^2\, \pi\big(-\Cas_{\fg,\kil}\big) \cdot v
+ (r_2^2-r_1^2) \, \Upsilon_\pi(v) 
+ (r_3^2-r_1^2)\, \pi\big(-\Cas_{\fk,\kil|_{\fk}}\big) \cdot v
, 
\end{aligned}
\end{equation}
where $\{Z_1,\dots,Z_{\dim\fk}\}$ is any orthonormal basis of $\fh$ with respect to $\kil|_\fh$, 
\begin{equation}\label{eq:trickyterm}
\Upsilon_\pi(v)= -\sum_{i_2=1}^{\dim\fp_2} \pi(X_{i_2}^{(2)})^2 \cdot v
\end{equation}
(the authors have kindly called it the `tricky term'), and $\Cas_{\fg,\kil}$ (resp.\ $\Cas_{\fk,\kil|_{\fk}}$) is the Casimir operator of $\fg$ (resp.\ $\fk$) with respect to $\kil$ (resp.\ $\kil|_\fk$). 
In the second identity we used that $\pi(Z)\cdot v=0$ for all $Z\in\fh$ because $v\in V_\pi^H$.

It is well known that Casimir elements acts on irreducible representations by calculable scalars; see \cite[\S2.2]{LauretRodriguez-hearingsymm1} for a rigorous definition and its properties.
In particular $\Cas_{\fg,\kil}\cdot v=\lambda_{\kil}^\pi\, v$ for all $v\in V_\pi$, with  $\lambda_{\kil}^\pi =\kil^*(\Lambda_\pi,\Lambda_\pi+2\rho_\fg)$, where $\Lambda_\pi$ is the highest weight of $\pi$ (once a maximal torus of $G$ and a Weyl chamber are chosen). 
However, it is quite difficult to obtain an eigenbasis of $\pi(-C_{g_r})$ in this generality since it may occur that $\pi(-\Cas_{\fk,\kil|_{\fk}})$ and $\Upsilon_\pi$ do not necessarily diagonalize simultaneously. 
Consequently, we do not expect an explicit description of $\Spec(M,g_r)$. 
However, the next remark determines the eigenvalue contributed by $\pi\in\widehat G$ such that $\dim V_\pi^H=1$, which will be enough for our purpose.

\begin{remark}\label{rem:Upsilon-dimV_pi^H=1}
Let $\pi$ be an irreducible representation of $G$ such that $\dim V_\pi^H=1$. 
On the one hand, the condition $\dim V_\pi^H=1$ forces there is exactly one representation $\tau\in\widehat K$ occurring in $\pi|_{K}$ (i.e.\ $\Hom_K(V_\tau,V_\pi)\neq0$) satisfying that $V_\tau^H\neq0$. 
Thus $\pi(-\Cas_{\fk,\kil|_{\fk}})\cdot v= \lambda_{\kil|_\fk}^\tau$ for all $v\in V_\pi^H$. 

On the other hand, $\Upsilon_\pi$ preserves $V_\pi^H$ since $\pi(-C_{g_r})$, $\pi(-\Cas_{\fg,\kil})$, and $\pi(-\Cas_{\fk,\kil|_{\fk}})$ do it. 
Hence, $\Upsilon_\pi$ acts on $V_\pi^H$ by an scalar, say $\upsilon^\pi$. 

We conclude from \eqref{eq:spec} and \eqref{eq:pi(C_r)} that $\pi$ contributes to $\Spec(G/H,g_r)$ with the eigenvalue 
\begin{equation}\label{eq:threeterms}
\begin{aligned}
\lambda_1^{\pi}(r) &
= \lambda_{\kil}^\pi \, r_1^2
+ \upsilon^\pi\,  (r_2^2-r_1^2) 
+ \lambda_{\kil|_\fk}^\tau \, (r_3^2-r_1^2)
\\ & 
= \big(\lambda_{\kil}^\pi -\upsilon^\pi -\lambda_{\kil|_{\fk}}^\tau \big)\, r_1^2
+ \upsilon^\pi \, r_2^2
+ \lambda_{\kil|_\fk}^\tau \, r_3^2
,
\end{aligned}
\end{equation}
with multiplicity $\dim V_\pi$. 
\end{remark}

\section{The case \texorpdfstring{$\Grass{2}{7}$}{Gr(2,7)}} 
\label{sec:case3}

In this section we consider the compact irreducible symmetric space $\Grass{2}{7}$ of oriented two-planes in $\R^7$.

\subsection{Homogeneous metrics for $\Grass{2}{7}$}
Let
\begin{equation}
\Grass{2}{7} =\frac{\SO(7)}{\SO(5)\times\SO(2)}
.
\end{equation} 
Since this presentation is symmetric, the isotropy representation is irreducible and consequently every $\SO(7)$-invariant metric on $\Grass{2}{7}$ is symmetric.  
We next define a non-symmetric presentation $\Grass{2}{7}=G/H$ having a three-parameter family of $G$-invariant metrics, which are of course homogeneous.

Let $G$ be the (unique up to conjugation) subgroup of $\SO(7)$ with Lie algebra of exceptional type ${G}_2$. 
It is well known that the action of $G$ on $\Grass{2}{7}$ is still transitive and the isotropy subgroup $H$ at the trivial element is isomorphic to $\Ut(2)$.

Let $T$ be a maximal subgroup of $H$, which is also a maximal torus of $G$ since $\op{rank}(G)=\op{rank}(H)=2$. 
As usual, we denote by $\ee_1,\ee_2,\ee_3$ the elements satisfying that $\Pi(\fg_\C,\ft_\C) = \{\alpha_1:=\ee_2-\ee_3,\; \alpha_2:=\ee_1-2\ee_2+\ee_3\}$ is a simple root system and $\ft_\C^*=\{\sum_{i=1}^3a_i\ee_i: a_1,a_2,a_3\in\C,\, a_1+a_2+a_3=0\}$. 
This gives fundamental weights $\omega_1:=\ee_1-\ee_3$, $\omega_2:=2\ee_1-\ee_2-\ee_3$, and the positive root system
\begin{equation} \label{eq3:positiverootsystemG}
\begin{aligned}
\Phi^+(\fg_\C,\ft_\C) &= \left\{
	\begin{array}{ll}
	\ee_2-\ee_3,&\ee_1-2\ee_2+\ee_3,\\
	\ee_1-\ee_2,&\ee_1+\ee_2-2\ee_3,\\
	\ee_1-\ee_3,&2\ee_1-\ee_2-\ee_3
	\end{array}
\right\}.
\end{aligned}
\end{equation}
It will be useful the root space decomposition
\begin{equation} \label{eq3:rootspacedecomposition}
\fg_\C =\ft_\C\oplus \bigoplus_{\alpha\in \Phi^+(\fg_\C,\ft_\C)} (\fg_\alpha\oplus\fg_{-\alpha})
.
\end{equation}

Without loosing generality, we pick $H$ the subgroup of $G$ such that its Lie algebra $\fh$ satisfies  $\fh=\ft_\C\oplus\fg_\beta\oplus\fg_{-\beta}$, where $\beta=\ee_1+\ee_2-2\ee_3\in \Phi^+(\fg_\C,\ft_\C)$. 

Let $K$ be the connected subgroup of $G$ such that its Lie algebra $\fk$ satisfies 
\begin{equation}\label{eq3:positiverootsystemK1}
\Phi^+(\fk_\C,\ft_\C) = \left\{
	\ee_1-2\ee_2+\ee_3,\;
	\ee_1+\ee_2-2\ee_3,\;
	2\ee_1-\ee_2-\ee_3
\right\}
,
\end{equation} 
which is isomorphic to $\SU(3)$. 
One can see that the corresponding simple roots are $\beta_1:=\ee_1+\ee_2-2\ee_3$ and $\beta_2:=\ee_1-2\ee_2+\ee_3$, and the fundamental weights are $\nu_1:=\ee_1-\ee_3$, $\nu_2:=\ee_1-\ee_2$.

We pick $\kil=-\kil_\fg$ as our $\Ad(G)$-invariant inner product, where $\kil_\fg$ is the Killing form of $\fg$, 

Let $\fq$ denote the orthogonal complement of $\fk$ into $\fg$. 
It turns out that $\fq$ is irreducible as a $K$-module, or in other words, $G/K$ is an isotropy irreducible space (see for instance \cite[7.107]{Besse}). 
From \eqref{eq3:rootspacedecomposition} and \eqref{eq3:positiverootsystemK1}, it follows that 
$\fq_\C=
\fg_{\ee_1-\ee_2}\oplus \fg_{-\ee_1+\ee_2}
\oplus 
\fg_{\ee_1-\ee_3} \oplus \fg_{-\ee_1+\ee_3} \oplus 
\fg_{\ee_2-\ee_3} \oplus \fg_{-\ee_2+\ee_3}
$.
However, as an $H$-module, we have the decomposition $\fq=\fp_1\oplus\fp_2$ with $\fp_1,\fp_2$ irreducible, and
\begin{align}
(\fp_1)_\C&
=\fg_{\ee_1-\ee_3}\oplus 
 \fg_{-\ee_1+\ee_3} \oplus 
 \fg_{\ee_2-\ee_3} \oplus 
 \fg_{-\ee_2+\ee_3} ,
 &
(\fp_2)_\C&
=\fg_{\ee_1-\ee_2} \oplus \fg_{-\ee_1+\ee_2}. 
\end{align}
Let $\fp_3$ be the orthogonal complement of $\fh$ in $\fk$, which is irreducible as an $H$-module and 
\begin{equation}
(\fp_3)_\C 
=\fg_{\ee_1-2\ee_2+\ee_3} \oplus
 \fg_{-\ee_1+2\ee_2-\ee_3} \oplus \fg_{2\ee_1-\ee_2-\ee_3}\oplus \fg_{-2\ee_1+\ee_2+\ee_3}.
\end{equation}
Note $\dim\fp_1=4$, $\dim\fp_2=2$, $\dim\fp_3=4$.

The decomposition $\fp=\fp_1\oplus\fp_2\oplus\fp_3$ satisfies condition \eqref{eq:hipotesis} in Section~\ref{sec:preliminaries}. 
Moreover, the subspaces $\fp_1,\fp_2,\fp_3$ are irreducible and non-equivalent as $H$-modules, thus every $G$-invariant metric on $\Grass{2}{7}=G/H$ is isometric to $g_r$ for some $r=(r_1,r_2,r_3)\in\R_{>0}^3$, which is induced by the  $\Ad(H)$-invariant inner product on $\fp$ given by 
\begin{equation}
\innerdots_r 
= \frac{1}{r_1^2} \kil|_{\fp_1} 
\oplus \frac{1}{r_2^2} \kil|_{\fp_2} 
\oplus \frac{1}{r_3^2} \kil|_{\fp_3}
.
\end{equation}

\subsection{The tricky term for $\Grass{2}{7}$}
This subsection is devoted to express the tricky term $\Upsilon_{\pi}$ given in \eqref{eq:trickyterm}.

For $\xi\in \ft_\C^*$, let us denote by $u_\xi\in\ft_\C$ the only element in $\ft_\C$ such that $\xi(H)=\kil_\fg(H,u_\xi)$ for all $H\in\ft_\C$. 
Theorem 6.6 in \cite{Knapp-book-beyond} ensures that we can pick $X_\alpha\in\fg_\alpha$ for each $\alpha\in \Phi(\fg_\C,\ft_\C)$ such that
\begin{equation}\label{eq:corchetesKnapp}
[X_\alpha,X_\beta] = 
\begin{cases}
u_\alpha &\text{if }\alpha+\beta=0,\, \alpha>0\\
N_{\alpha,\beta} X_{\alpha+\beta} &\text{if }\alpha+\beta\in \Phi(\fg_\C,\ft_\C),\\
0 &\text{otherwise, }
\end{cases}
\end{equation}
for all $\alpha,\beta\in \Phi(\fg_\C,\ft_\C)$, with constant terms $N_{\alpha,\beta}$ satisfying $N_{\alpha,\beta}=-N_{-\alpha,-\beta}$ and  $N_{\alpha,\beta}^2=\tfrac12 q(1+p)|\alpha|^2$, where $\{\beta+n\alpha: -p\leq n\leq q\}$ is the $\alpha$-string containing $\beta$, and moreover, 
\begin{equation}
\fg=
\bigoplus_{\alpha\in\Pi(\fg_\C,\ft_\C)} \R\mi u_\alpha
\oplus
\bigoplus_{\alpha\in\Phi^+(\fg_\C,\ft_\C)} \big(\R  (X_\alpha-X_{-\alpha}) \oplus \R \mi (X_\alpha+X_{-\alpha})\big)
.
\end{equation}

One can easily check that the following elements form an orthonormal basis of $\fp_2$:
\begin{align}
X_1^{(2)} &=\tfrac1{\sqrt2} (X_{\ee_1-\ee_2}-X_{-\ee_1+\ee_2}),
&
X_2^{(2)} &=\tfrac{\mi}{\sqrt2} (X_{\ee_1-\ee_2}+X_{-\ee_1+\ee_2})
.
\end{align}

\begin{lemma}\label{lem3:Upsilon-rootexpression}
For any $\pi\in \widehat G$, we have that 
$
\Upsilon_\pi(v)
= 2\, \pi(X_{\ee_1-\ee_2})\cdot \big(\pi(X_{-\ee_1+\ee_2})\cdot v\big)
$ 
for any $v\in V_\pi^H$. 
\end{lemma}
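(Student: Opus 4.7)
The plan is to expand the definition of $\Upsilon_\pi$ using the explicit orthonormal basis $\{X_1^{(2)},X_2^{(2)}\}$ of $\fp_2$ recalled just above the lemma, and then simplify using the bracket relation $[X_\alpha,X_{-\alpha}]=u_\alpha$ for $\alpha=\ee_1-\ee_2$, which follows from \eqref{eq:corchetesKnapp}.

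First I would square out $\pi(X_1^{(2)})^2$ and $\pi(X_2^{(2)})^2$ and add them. Because $X_1^{(2)}=\tfrac1{\sqrt2}(X_\alpha-X_{-\alpha})$ and $X_2^{(2)}=\tfrac{\mi}{\sqrt2}(X_\alpha+X_{-\alpha})$, the two pure squares $\pi(X_\alpha)^2$ and $\pi(X_{-\alpha})^2$ come with opposite signs and cancel, and the mixed terms combine to
\[
\pi(X_1^{(2)})^2+\pi(X_2^{(2)})^2 = -\pi(X_\alpha)\pi(X_{-\alpha})-\pi(X_{-\alpha})\pi(X_\alpha).
\]
Therefore, for any $v\in V_\pi$,
\[
\Upsilon_\pi(v) = \pi(X_\alpha)\pi(X_{-\alpha})\cdot v + \pi(X_{-\alpha})\pi(X_\alpha)\cdot v.
\]

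Next I would use $\pi(X_{-\alpha})\pi(X_\alpha)=\pi(X_\alpha)\pi(X_{-\alpha})-\pi([X_\alpha,X_{-\alpha}])=\pi(X_\alpha)\pi(X_{-\alpha})-\pi(u_\alpha)$ to obtain
\[
\Upsilon_\pi(v) = 2\,\pi(X_\alpha)\pi(X_{-\alpha})\cdot v - \pi(u_\alpha)\cdot v.
\]
To finish, I would observe that $u_\alpha\in\ft_\C\subset\fh_\C$ (recall that $T\subset H$ was chosen to be a maximal torus of both $G$ and $H$), so that $v\in V_\pi^H$ implies $\pi(u_\alpha)\cdot v=0$ by the complex-linear extension of the $H$-invariance, yielding the claimed identity.

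There is no real obstacle: the only subtleties are keeping track of the factor $\mi$ in $X_2^{(2)}$ when squaring (which is what produces the cancellation of $\pi(X_\alpha)^2$ and $\pi(X_{-\alpha})^2$) and recognizing that the correction term $\pi(u_\alpha)$ vanishes on $H$-fixed vectors because $\ft\subset\fh$.
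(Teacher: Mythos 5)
Your proof is correct and follows essentially the same route as the paper: expand the two basis vectors of $\fp_2$, observe that the pure squares $\pi(X_{\pm\alpha})^2$ cancel, rewrite $\pi(X_{-\alpha})\pi(X_\alpha)$ via the bracket relation to produce $2\pi(X_\alpha)\pi(X_{-\alpha})-\pi(u_\alpha)$, and kill the $\pi(u_\alpha)$ term using $\ft\subset\fh$. The only cosmetic difference is that the paper phrases the last step as $V_\pi^H\subset V_\pi^T$ while you invoke $u_\alpha\in\fh_\C$ directly; these are equivalent.
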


\begin{proof}
An easy computation shows that 
\begin{equation*}
\begin{aligned}
\Upsilon_\pi(v)&
=
\pi(X_{-\ee_1+\ee_2})\cdot \pi(X_{\ee_1-\ee_2}) \cdot v  
+ \pi(X_{\ee_1-\ee_2})\cdot \pi(X_{-\ee_1+\ee_2}) \cdot v  
\\ & 
= 2\, \pi(X_{\ee_1-\ee_2})\cdot \big(\pi(X_{-\ee_1+\ee_2})\cdot v\big)
-\pi(u_{\ee_1-\ee_2})\cdot v
.
\end{aligned}
\end{equation*}

Now, $T\subset H$ forces $V_\pi^H\subset V_\pi^T$, which implies that $\pi(u_{\ee_1-\ee_2})\cdot v=0$, and the assertion follows.  
\end{proof}

We now assume $\dim V_\pi^H=1$. 
We set 
\begin{equation}
\begin{aligned}
\fa &=\Span_\C\{u_{\ee_1-\ee_2}, X_{\ee_1-\ee_2}, X_{-\ee_1+\ee_2}\},\\
W&=\Span_\C\{\pi(X_{\ee_1-\ee_2})^l\cdot v,  \pi(X_{-\ee_1+\ee_2})^l\cdot v: l\geq0,v\in V_\pi^H \}
.
\end{aligned}
\end{equation}
It turns out that $\fa$ is a Lie algebra isomorphic to $\sll(2,\C)$ and $W$ is an irreducible $\fa$-module; the last part is not true if $\dim V_\pi^H>1$. 
Moreover, $\dim W$ is odd because its zero weight is non-zero. 
The next goal is to obtain the scalar for which $\Upsilon_\pi$ acts on $V_\pi^H$ in terms of $\dim W$. 

We write $\sll(2,\C)=\Span_\C\{h,e,f\}$ with 
$h=\left(\begin{smallmatrix}
1&0\\ 0&-1
\end{smallmatrix}\right),
$
$e=\left(\begin{smallmatrix}
0&1\\ 0&0
\end{smallmatrix}\right),
$
$
f=\left(\begin{smallmatrix}
0&0\\ 1&0
\end{smallmatrix}\right)
$.
For $m\in\N$, let $(W_m,\chi_m)$ denote the irreducible representation of $\sll(2,\C)$ of dimension $m+1$, which is unique up to equivalence. 
Its weight decomposition is given by $W_m=\bigoplus_{i=0}^m W_m(m-2i)$, with $W_m(m-2i)$ the weight space of weight $m-2i$, which has dimension one, for any $i=0,\dots,m$.

\begin{lemma}\label{lem3:irrep-sl(2,C)}
For $0\leq i\leq m$ and $v\in W_m(m-2i)$, one has
$
2\,\chi_m(e)\cdot(\chi_m(f)\cdot v)
=2(i+1)(m-i)\, v.
$
\end{lemma}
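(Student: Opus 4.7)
This is a standard fact from the representation theory of $\sll(2,\C)$, and the plan is simply to reduce it to one of the two canonical ways of computing on weight vectors.

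My first approach would be the direct basis-level computation. Fix a highest weight vector $v_0\in W_m(m)$ (so $\chi_m(e)\cdot v_0=0$ and $\chi_m(h)\cdot v_0=m\,v_0$), and define $v_i=\tfrac{1}{i!}\chi_m(f)^i\cdot v_0$ for $0\le i\le m$. Using the commutation relations $[h,f]=-2f$, $[h,e]=2e$, $[e,f]=h$, a short induction on $i$ gives the familiar formulas
\begin{equation*}
\chi_m(h)\cdot v_i=(m-2i)\,v_i,\qquad
\chi_m(f)\cdot v_i=(i+1)\,v_{i+1},\qquad
\chi_m(e)\cdot v_i=(m-i+1)\,v_{i-1}.
\end{equation*}
Composing the last two at index $i$ yields $\chi_m(e)\chi_m(f)\cdot v_i=(i+1)(m-i)\,v_i$, so multiplying by $2$ gives exactly the desired identity on $v_i$. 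Since the weight space $W_m(m-2i)$ is one-dimensional, every $v\in W_m(m-2i)$ is a scalar multiple of $v_i$ and the formula transfers.

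An alternative I would mention (or use as the main argument) is via the Casimir element $C=\chi_m(e)\chi_m(f)+\chi_m(f)\chi_m(e)+\tfrac12\chi_m(h)^2$, which acts on $W_m$ by the scalar $\tfrac12 m(m+2)$. Writing $2\,\chi_m(e)\chi_m(f)=\bigl(\chi_m(e)\chi_m(f)+\chi_m(f)\chi_m(e)\bigr)+\bigl[\chi_m(e),\chi_m(f)\bigr]=C-\tfrac12\chi_m(h)^2+\chi_m(h)$ and evaluating on a weight vector of weight $m-2i$ gives
\begin{equation*}
2\,\chi_m(e)\chi_m(f)\cdot v
=\left(\tfrac12 m(m+2)-\tfrac12(m-2i)^2+(m-2i)\right)v,
\end{equation*}
and a routine simplification of the right-hand side yields $2(i+1)(m-i)\,v$.

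There is no real obstacle here: the content is purely the structure of the irreducible $\sll(2,\C)$-module $W_m$. I would choose the basis-level approach for transparency, since its output (the coefficient $(i+1)(m-i)$) matches exactly the form in which the lemma will be used in the sequel, namely to evaluate $\Upsilon_\pi$ on $V_\pi^H$ through Lemma~\ref{lem3:Upsilon-rootexpression}.
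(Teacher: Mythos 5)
Your primary argument is correct and matches the paper's proof in essence: both compute directly on the standard basis of $W_m$ using the classical $\sll(2,\C)$ formulas (the paper cites \cite[Thm.~1.63]{Knapp-book-beyond}, which uses the normalization $\chi_m(f)\cdot v_i=v_{i+1}$, $\chi_m(e)\cdot v_i=i(m-i+1)v_{i-1}$, while you use the divided-power normalization, but the two are equivalent and yield the same scalar $2(i+1)(m-i)$). The Casimir alternative you sketch is a valid independent route, but since you opt for the basis-level computation, your proof is substantively identical to the paper's.
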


\begin{proof}
From \cite[Thm.~1.63]{Knapp-book-beyond}, there is a basis $\{v_0,\dots,v_m\}$ of $W_m$ such that 
$\chi_m(h)\cdot v_i=(m-2i)v_i$, 
$\chi_m(e)\cdot v_0=0$, 
$\chi_m(f)\cdot v_i=v_{i+1}$ (with $v_{m+1}=0$), and
$\chi_m(e)\cdot v_i=i(m-i+1)v_{i-1}$.
Note that $W_m(m-2j)=\C v_j$ for any $j$. 
We pick $v=v_i$ without loosing generality. 
Then, $2\,\chi_m(e)\cdot(\chi_m(f)\cdot v)=2\chi_m(e)\cdot v_{i+1} = 2(i+1)(m-i)v_{i}$, as required. 
\end{proof}

One can check that the correspondence
\begin{align}
h_{\ee_1-\ee_2}&\leftrightarrow 24\, u_{\ee_1-\ee_2},&
e_{\ee_1-\ee_2}&\leftrightarrow \sqrt{24} \, X_{\ee_1-\ee_2}, &
f_{\ee_1-\ee_2}&\leftrightarrow \sqrt{24} \, X_{-\ee_1+\ee_2}
\end{align}
defines an isomorphism between $\fa$ and $\sll(2,\C)$. 
Since $W(0)=V_\pi^H\neq0$, $\dim W$ is odd and Lemma~\ref{lem3:irrep-sl(2,C)} gives
$
2\,\pi(e)\cdot(\pi(f)\cdot v) 
=\frac{(\dim W)^2-1}{2}\, v
$ 
for any $v\in V_\pi^H$.
We conclude from Lemma~\ref{lem3:Upsilon-rootexpression} that
\begin{equation}\label{eq3:Upsilon_pi(v)}
\Upsilon_\pi(v)
=2\, \pi(X_{\ee_1-\ee_2}) \cdot \big( \pi(X_{-\ee_1+\ee_2})\cdot v\big)
= \frac{(\dim W)^2-1}{48}\, v
\qquad\text{for any }v\in V_\pi^H,
\end{equation}
provided $\dim V_\pi^H=1$.

\subsection{Some low Laplace eigenvalues of $\Grass{2}{7}$}

According to the description \eqref{eq:spec} of the spectrum of the Laplace-Beltrami operator on $(\Grass{2}{7},g_r)$, each $\pi\in\widehat G_H$ contributes to $\Spec(\Grass{2}{7},g_r)$ with $\dim V_\pi^H\dim V_\pi$ eigenvalues. 
The goal of this subsection is to determine these eigenvalues for $\pi_{\omega_1}$ and $\pi_{\omega_2}$. 
We first determine the Casimir eigenvalues. 

\begin{remark}
By combining the branching laws from $G=\operatorname{G}_2$ to $K=\SU(3)$ by Mashimo~\cite{Mashimo97b} and the spherical representations of the pair $(K,H)= (\SU(3),\Ut(2))$, one can prove that $\widehat G_H=\widehat G$, that is, every irreducible representation of $G$ has non-trivial elements invariant by $H$. 
The proof is not included for shortness, since it is not need it for our purposes. 
\end{remark}

\begin{lemma}\label{lem3:casimir}
For $\Lambda=a_1\ee_1+a_2\ee_2+a_3\ee_3\in\PP^+(G)$ and $\mu=b_1\ee_1+b_2\ee_2+b_3\ee_3\in\PP^+(K)$, we have that 
\begin{equation}
\begin{aligned}
\lambda_{\kil_\fg}^{\pi_\Lambda}&=  \lambda^{\pi_\Lambda}
	&&\qquad\text{where }&\lambda^{\pi_\Lambda}&= \frac{1}{24}\big(a_1^2+a_2^2+a_3^2 + 6a_1-2a_2-4a_3\big)
,\\
\lambda_{\kil_\fg|_{\fk}}^{\tau_{\mu}} &
=\frac{3}{4} \lambda^{\tau_{\mu}}
	&&\qquad\text{where } &\lambda^{\tau_{\mu}}&=\frac{1}{18}\big(b_1^2+b_2^2+b_3^2 + 6b_1\big)
.
\end{aligned}
\end{equation}
\end{lemma}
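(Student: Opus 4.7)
The plan is to apply the standard identity $\lambda_B^{\pi_\Lambda}=B^*(\Lambda,\Lambda+2\rho)$ for the scalar by which the Casimir associated to an invariant form $B$ acts on an irreducible representation of highest weight $\Lambda$ (recalled in \cite[\S2.2]{LauretRodriguez-hearingsymm1}), and to carry out the computation by direct bookkeeping on the two root systems.

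First I would compute the Weyl vectors. Summing the six positive roots in \eqref{eq3:positiverootsystemG} gives $2\rho_\fg=6\ee_1-2\ee_2-4\ee_3$, so $\Lambda+2\rho_\fg=(a_1+6)\ee_1+(a_2-2)\ee_2+(a_3-4)\ee_3$, and pairing with $\Lambda=\sum_i a_i\ee_i$ in the ambient $\R^3$ produces $a_1^2+a_2^2+a_3^2+6a_1-2a_2-4a_3$. Analogously, summing the three positive roots of $\fk$ in \eqref{eq3:positiverootsystemK1} gives $2\rho_\fk=4\ee_1-2\ee_2-2\ee_3$; the $\R^3$-pairing of $\mu$ and $\mu+2\rho_\fk$ reads $b_1(b_1+4)+b_2(b_2-2)+b_3(b_3-2)$, and using $b_1+b_2+b_3=0$ to substitute $-2(b_2+b_3)=2b_1$ this collapses to $b_1^2+b_2^2+b_3^2+6b_1$.

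Second, I would pin down the proportionality constants relating $\kil_\fg^*$ and $\kil_\fk^*$ to the standard inner product on $\R^3$ restricted to the hyperplane $\{a_1+a_2+a_3=0\}$. The identity $\kil_\fg(H,H)=\sum_{\alpha\in\Phi(\fg_\C,\ft_\C)}\alpha(H)^2$ for $H\in\ft_\C$ follows at once from \eqref{eq:corchetesKnapp} together with the fact that $\ad H$ acts on $\fg_\alpha$ by the scalar $\alpha(H)$; summing over the twelve $G_2$ roots (resp.\ the six $\su(3)$ roots) and inverting the resulting diagonal quadratic form yields $\kil_\fg^*=\tfrac{1}{24}\langle\cdot,\cdot\rangle$ and $\kil_\fk^*=\tfrac{1}{18}\langle\cdot,\cdot\rangle$ on the hyperplane, where $\langle\cdot,\cdot\rangle$ denotes the ambient standard inner product on $\R^3$.

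Plugging these constants into the Casimir formula delivers both identities. The first is $\lambda_{\kil_\fg}^{\pi_\Lambda}=\kil_\fg^*(\Lambda,\Lambda+2\rho_\fg)=\lambda^{\pi_\Lambda}$. For the second, I would observe that $(\kil_\fg|_\fk)^*$ coincides with the restriction of $\kil_\fg^*$ to $\ft_\C^*$ (through the $\kil_\fg$-orthogonal decomposition $\fg=\fk\oplus\fq$), so $\lambda_{\kil_\fg|_\fk}^{\tau_\mu}=\kil_\fg^*(\mu,\mu+2\rho_\fk)=\tfrac{1}{24}(b_1^2+b_2^2+b_3^2+6b_1)=\tfrac{3}{4}\lambda^{\tau_\mu}$, with the factor $\tfrac{18}{24}=\tfrac{3}{4}$ emerging automatically from the ratio of the two normalizations. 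The only nontrivial step is the trace computation fixing $\kil_\fg^*=\tfrac{1}{24}\langle\cdot,\cdot\rangle$: one has to balance the contributions of the six short and six long $G_2$-roots correctly, after which everything else is routine.
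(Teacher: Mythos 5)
Your argument is correct but takes a somewhat different route from the paper's. The paper simply states the normalization $\kil_\fg^*(\ee_i,\ee_j)=\frac{1}{24}\delta_{ij}$, cites D'Atri--Ziller for the proportionality $\kil_\fk=\frac34\kil_\fg|_\fk$, and then invokes the general rescaling identities (2.7)--(2.8) from Part~I to obtain both the factor $\tfrac34$ and the normalization $\kil_\fk^*(\ee_i,\ee_j)=\frac{1}{18}\delta_{ij}$. You instead recompute both normalizations from scratch via the root-sum identity $\kil_\fg(H,H)=\sum_\alpha\alpha(H)^2$ (six short roots of squared length $2$ and six long of squared length $6$ yield $24$ for $G_2$; the six long roots alone give $18$ for $\su(3)$), and you sidestep the D'Atri--Ziller citation entirely by observing that $(\kil_\fg|_\fk)^*$ and $\kil_\fg^*$ agree on $\ft_\C^*$ because $\ft$ is a common maximal torus and only the restriction to $\ft_\C$ enters the dualization, so the factor $\tfrac34$ falls out as the ratio $\frac{1/24}{1/18}$. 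Both routes are fine; the paper's is shorter given that the constants were already in hand from earlier references, while yours is more self-contained and makes the origin of $\tfrac34$ transparent. One cosmetic imprecision: you speak of "the restriction of $\kil_\fg^*$ to $\ft_\C^*$," but $\kil_\fg^*$ is already a form on $\ft_\C^*$; what you mean (and use correctly) is that $(\kil_\fg|_\fk)^*=\kil_\fg^*$ as bilinear forms on $\ft_\C^*$.
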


\begin{proof}
One has $\kil_\fg^*(\ee_i,\ee_j)=\frac{1}{24} \delta_{i,j}$ and $\rho_\fg=3\ee_1-\ee_2-2\ee_3$, thus $\lambda^{\pi_{\Lambda}} 
= \kil_\fg^*(\Lambda,\Lambda+2\rho_\fg)
= \tfrac{1}{24}\big(a_1(a_1+6)+a_2(a_2-2)+a_3(a_3-4)\big)
= \frac{1}{24}\big(a_1^2+a_2^2+a_3^2 + 6a_1-2a_2-4a_3\big)
$, as claimed.

By \cite[p.~37]{DAtriZiller}, 
$\kil_\fk
= \frac34\kil_\fg|_{\fk}$, thus $\lambda_{\kil_\fg|_{\fk}}^{\tau_{\mu}} 
=\frac{3}{4} \lambda^{\tau_{\mu}}$ by (2.8) in \cite[\S2.2]{LauretRodriguez-hearingsymm1} and $\kil_{\fk}^*(\ee_i,\ee_j)=\frac{4}{3} \kil_{\fg}^*(\ee_i,\ee_j)
%=\frac{4}{3}\frac{1}{24}\delta_{i,j}
=\frac{1}{18}\delta_{i,j}$ (the factor $\frac34$ is inverted after dualizing; see \cite[(2.7)]{LauretRodriguez-hearingsymm1}). 
Since
$
\rho_\fk
=\frac12 \sum_{\beta\in\Phi^+(\fk_\C,\ft_\C)} \beta 
=2\ee_1-\ee_2-\ee_3
$, we obtain that
$
\lambda^{\tau_{\mu}} 
= -\kil_\fk^*(\mu,\mu+2\rho_\fk)
= -\kil_\fk^*(\mu,\mu+2\rho_\fk)
= \frac1{18}\big(b_1(b_1+4)+b_2(b_2-2)+b_3(b_3-2)\big)
= \frac1{18}\big(b_1^2+b_2^2+b_3^2 + 6b_1\big)
$.
Note that this is consistent with $\lambda^{\Ad_K}=1$ since $\Ad_K=\tau_{\nu_1+\nu_2}$ (i.e.\ the adjoint representation of $K$ has highest weight $\nu_1+\nu_2=2\ee_1-\ee_2-\ee_3$). 
\end{proof}

\begin{proposition}\label{prop3:loweigenvalues}
The representations $\pi_{\omega_1}$ and $\pi_{\omega_2}$ contribute to $\Spec(\Grass{2}{7},g_r)$ with the eigenvalues
\begin{align}
\lambda^{\pi_{\omega_1}}(r)&
	=\frac13r_1^2+\frac16r_2^2&\text{ and }&&
\lambda^{\pi_{\omega_2}}(r)&
	=\frac1{12}r_1^2+\frac16r_2^2+\frac{3}{4}r_3^2,
\end{align}
%$\lambda^{\pi_{\omega_1}}(r)=\frac13r_1^2+\frac16r_2^2$ and $\lambda^{\pi_{\omega_2}}(r)=\frac1{12}r_1^2+\frac16r_2^2+\frac{3}{4}r_3^2$ 
with multiplicity $7$ and $14$ respectively.  
\end{proposition}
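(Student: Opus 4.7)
The plan is to apply formula \eqref{eq:threeterms} from Remark~\ref{rem:Upsilon-dimV_pi^H=1} together with the tricky-term identity \eqref{eq3:Upsilon_pi(v)} to $\pi=\pi_{\omega_1}$ and $\pi=\pi_{\omega_2}$. First I would check that both satisfy $\dim V_\pi^H=1$. Since $T\subset H$, any $H$-invariant vector lies in the zero-weight space and must be annihilated by $\pi(X_{\pm\beta})$ with $\beta=\ee_1+\ee_2-2\ee_3$. For $\pi_{\omega_1}$, the $7$-dimensional standard representation of $G_2$, the weight set is $\{0\}\cup\{\pm(\ee_i-\ee_j):i\neq j\}$; its zero-weight space is one-dimensional and is automatically $H$-invariant because $\pm\beta$ are not weights. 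For $\pi_{\omega_2}$, the $14$-dimensional adjoint representation, the zero-weight space equals $\ft_\C$, and the computation $[X_{\pm\beta},u]=\mp\beta(u)\,X_{\pm\beta}$ shows $V_{\pi_{\omega_2}}^H=\{u\in\ft_\C:\beta(u)=0\}$. Since $\beta$ and $\ee_1-\ee_2$ are Euclidean-orthogonal in $\bigoplus_i\R\ee_i$, this subspace is $\C\,u_{\ee_1-\ee_2}$, confirming $\dim V_{\pi_{\omega_2}}^H=1$.

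Next I would compute the three scalars in \eqref{eq:threeterms}. The Casimir eigenvalues come from Lemma~\ref{lem3:casimir}: inserting $\omega_1=\ee_1-\ee_3$ yields $\lambda_{\kil_\fg}^{\pi_{\omega_1}}=\tfrac12$, and inserting $\omega_2=2\ee_1-\ee_2-\ee_3$ yields $\lambda_{\kil_\fg}^{\pi_{\omega_2}}=1$. The unique $\tau\in\widehat K$ appearing in $\pi|_K$ with $V_\tau^H\neq0$ is read off from the $\SU(3)$-branching of $G_2$: the standard representation decomposes as $\mathbf 1\oplus\mathbf 3\oplus\bar{\mathbf 3}$ and the adjoint as $\op{ad}_{\SU(3)}\oplus\mathbf 3\oplus\bar{\mathbf 3}$. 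Under the embedding $\Ut(2)\subset\SU(3)$ as the stabilizer of a line, the $\Ut(2)$-action on the stabilized line is by a nontrivial power of $\det$, so neither $\mathbf 3$ nor $\bar{\mathbf 3}$ has $\Ut(2)$-invariants, whereas the center of $\mathfrak u(2)\subset\mathfrak{su}(3)$ provides a $\Ut(2)$-invariant inside $\op{ad}_{\SU(3)}$. Hence $\tau$ is trivial for $\pi_{\omega_1}$, giving $\lambda_{\kil_\fg|_\fk}^\tau=0$, and $\tau=\tau_{\nu_1+\nu_2}$ for $\pi_{\omega_2}$, giving $\lambda_{\kil_\fg|_\fk}^\tau=\tfrac34\cdot 1=\tfrac34$ by Lemma~\ref{lem3:casimir}.

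The remaining scalar $\upsilon^\pi$ is obtained from \eqref{eq3:Upsilon_pi(v)} once $\dim W$ is known. In both cases I expect $\dim W=3$: for $\pi_{\omega_1}$ because the $(\ee_1-\ee_2)$-string through the zero weight consists of exactly $0,\pm(\ee_1-\ee_2)$ (as $\pm 2(\ee_1-\ee_2)$ is not a weight); for $\pi_{\omega_2}$ because a direct bracket computation starting from $v=u_{\ee_1-\ee_2}$ gives $\pi(X_{\pm(\ee_1-\ee_2)})\cdot v$ equal to a nonzero multiple of $X_{\pm(\ee_1-\ee_2)}$, while a second application vanishes since $\pm 2(\ee_1-\ee_2)$ is not a root of $G_2$. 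Thus $\upsilon^{\pi_{\omega_i}}=\tfrac{3^2-1}{48}=\tfrac16$ for $i=1,2$. Substituting into \eqref{eq:threeterms} and using $\dim V_{\pi_{\omega_1}}=7$, $\dim V_{\pi_{\omega_2}}=14$ from Weyl's dimension formula produces
\begin{align*}
\lambda^{\pi_{\omega_1}}(r)&=\tfrac12 r_1^2+\tfrac16(r_2^2-r_1^2)=\tfrac13 r_1^2+\tfrac16 r_2^2,\\
\lambda^{\pi_{\omega_2}}(r)&=r_1^2+\tfrac16(r_2^2-r_1^2)+\tfrac34(r_3^2-r_1^2)=\tfrac1{12}r_1^2+\tfrac16 r_2^2+\tfrac34 r_3^2,
\end{align*}
with the asserted multiplicities.

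The main obstacle is the bookkeeping for $\pi_{\omega_2}$: one must correctly pin down the $H$-invariant line inside the two-dimensional zero-weight space $\ft_\C$ and then verify that the chosen generator $u_{\ee_1-\ee_2}$ produces a genuinely three-dimensional $\sll(2,\C)$-module under the action of $\fa$, rather than sitting in the kernel of both $\pi(X_{\ee_1-\ee_2})$ and $\pi(X_{-\ee_1+\ee_2})$, which would spuriously force $\upsilon^{\pi_{\omega_2}}=0$.
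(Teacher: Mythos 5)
Your proposal is correct and follows essentially the same route as the paper's proof: compute the three scalars in \eqref{eq:threeterms} by identifying the unique $K$-type $\tau$ with $H$-invariants, reading off the Casimir scalars from Lemma~\ref{lem3:casimir}, and pinning down $\dim W=3$ to get $\upsilon^\pi=\tfrac16$ via \eqref{eq3:Upsilon_pi(v)}. The only cosmetic difference is that where the paper invokes a Sage computation for the branching $\pi_{\omega_2}|_K\simeq\tau_{\nu_1}\oplus\tau_{\nu_2}\oplus\tau_{\nu_1+\nu_2}$ and asserts $V_\pi^H=V_{\pi_{\omega_1}}(0)$ as ``easy to check,'' you supply short conceptual justifications (the $\mathbf 1\oplus\mathbf 3\oplus\bar{\mathbf 3}$ and $\op{ad}\oplus\mathbf 3\oplus\bar{\mathbf 3}$ decompositions, the observation that $\pm\beta\notin\PP(\pi_{\omega_1})$, and the orthogonality of $\beta$ and $\ee_1-\ee_2$), which is perfectly fine and arguably more self-contained.
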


\begin{proof}
We have the weight decomposition
\begin{equation}
\begin{aligned}
V_{\pi_{\omega_1}}&= V_{\pi_{\omega_1}}(0) \oplus \bigoplus_{i\neq j} V_{\pi_{\omega_1}}(\ee_i-\ee_j)
.
\end{aligned}
\end{equation}
One can easily check that $V_\pi^H= V_\pi^K=V_{\pi_{\omega_1}}(0)$, so $\tau=1_K$ is the only irreducible representation of $K$ satisfying $[1_H:\tau|_H][\tau:\pi_{\omega_1}|_K]>0$. 
Moreover, seeing $V_{\pi_{\omega_1}}$ as an $\fa$-module, the irreducible subspace containing $V_\pi^H$ is  $W:=V_{\pi_{\omega_1}}(\ee_1-\ee_2)\oplus V_{\pi_{\omega_1}}(0)\oplus V_{\pi_{\omega_1}}(-\ee_1+\ee_2)$. 
Since $\dim W=3$, \eqref{eq3:Upsilon_pi(v)} forces $\Upsilon_\pi|_{V_{\pi}^H}=\frac{1}{6}\Id_{V_{\pi}^H}$, or $\upsilon_{\kil}^\pi=\frac16$ in the notation of Remark~\ref{rem:Upsilon-dimV_pi^H=1}. 
Furthermore, $\lambda^{\pi_{\omega_1}}=\frac12$ by Lemma~\ref{lem3:casimir}. 
According to Remark~\ref{rem:Upsilon-dimV_pi^H=1}, we conclude that $\pi_{\omega_1}$ contributes with the eigenvalue
\begin{equation}
\begin{aligned}
\lambda^{\pi_{\omega_1}}(r) &
= \big(\lambda_{\kil}^{\pi_{\omega_1}} -\upsilon_{\kil}^{\pi_{\omega_1}} -\lambda_{\kil|_\fk}^{1_K} \big)r_1^2
+ \upsilon_{\kil}^{\pi_{\omega_1}} r_2^2
+ \lambda_{\kil|_\fk}^{1_K} r_3^2
= \tfrac13 r_1^2
+ \tfrac16 r_2^2
,
\end{aligned}
\end{equation}
with multiplicity $\dim V_{\pi_{\omega_1}}=7$, as claimed. 

We now consider $\pi_{\omega_2}$, which is equivalent to the adjoint representation of $G$, thus $\lambda^{\pi_{\omega_2}}=1$ and its non-zero weights are precisely the roots in $\Phi(\fg_\C,\ft_\C)$. 

One can get from the branching rule from $G$ to $K$ in \cite{Mashimo97b} that 
\begin{equation}\label{eq3:pi_omega2|_K}
\pi_{\omega_2}|_{K}\simeq \tau_{\nu_1}\oplus\tau_{\nu_2}\oplus\tau_{\nu_1+\nu_2}.
\end{equation}
Alternatively, \cite{Sage} calculates it as follows: 
\smallskip
\begin{lstlisting}
sage: G=WeylCharacterRing("G2", style="coroots")
sage: K=WeylCharacterRing("A2", style="coroots")
sage: b=branching_rule(G,K,"extended")
sage: omega=G.fundamental_weights()
sage: print("checking the dimension of G(omega2):", G(omega[2]).degree())
sage: print("branching G(omega2) to K:")
sage: G(omega[2]).branch(K,rule=b)
checking the dimension of G(omega2): 14
branching G(omega2) to K:
A2(0,1) + A2(1,0) + A2(1,1)
\end{lstlisting}
\smallskip
Here, for non-negative integers $a,b$, \texttt{A2(a,b)} means in our notation $\tau_{a\nu_1+b\nu_2}$.

The first two terms in \eqref{eq3:pi_omega2|_K} are the standard and its contragradient representation, and none of them contains non-trivial fixed points by $H$. 
The representation $\tau_{\nu_1+\nu_2}$ is precisely the adjoint representation of $K$, and its $H$-invariant subspace are the elements in the Cartan subalgebra $\ft$ orthogonal to $\fh$, that is, $V_{\pi_{\omega_2}}^H=V_{\tau_{\nu_1+\nu_2}}^H=\R\mi u_{\ee_1-\ee_2}$. 
Lemma~\ref{lem3:casimir} gives $\lambda_{\kil|_\fk}^{\tau_{\nu_1+\nu_2}}=\frac34$. 

The $\fa$-irreducible subspace of $V_{\pi_{\omega_2}}\simeq\fg$ containing $V_\pi^H$ is precisely $\fa$, which has dimension $3$, so $\Upsilon_{\pi_{\omega_2}}|_{V_\pi^H}=\frac16 \Id_{V_\pi^H}$. 
We conclude that $\pi_{\omega_2}$ contributes with the eigenvalue
\begin{equation}
\begin{aligned}
\lambda^{\pi_{\omega_2}}(r) &
= \big(\lambda_{\kil}^{\pi_{\omega_2}} -\upsilon_{\kil}^{\pi_{\omega_2}} -\lambda_{\kil|_\fk}^{\tau_{\nu_1+\nu_2}} \big)r_1^2
+ \upsilon_{\kil}^{\pi_{\omega_2}} r_2^2
+ \lambda_{\kil|_\fk}^{\tau_{\nu_1+\nu_2}} r_3^2
= \tfrac1{12} r_1^2
+ \tfrac16 r_2^2
+ \tfrac34 r_3^2
,
\end{aligned}
\end{equation}
with multiplicity $\dim V_{\pi_{\omega_1}}=14$, as claimed. 
\end{proof}

\subsection{Spectral uniqueness for $\Grass{2}{7}$}

We are now ready to show that every symmetric metric on $\Grass{2}{7}$ is spectrally unique withing the space of $G$-invariant metrics on $\Grass{2}{7}$. 

According to \cite[\S5]{Kerr96}, the symmetric metrics on $\Grass{2}{7}$ are 
\begin{equation}
\left\{\bar g_t:= g_{(\sqrt6 t,\sqrt3 t,\sqrt2 t)}:t>0 \right\}.
\end{equation}

\begin{remark}\label{rem3:notationMegan}
In the notation in \cite{Kerr96}, $\fp_1$ and $\fp_2$ are interchanged and $Q=8\, \kil_\fg$, so $x_1=\frac{8}{r_2^2}$, $x_2=\frac{8}{r_1^2}$, and $x_3=\frac{8}{r_3^2}$.
\end{remark}

The standard symmetric space $\big(\frac{\SO(7)}{\SO(5)\times\SO(2)}, g_{\kil_{\so(7)}}\big)$ is isometric to $\bar g_t$ for $t=\sqrt{2/5}$.
One has $\lambda_1\big(\frac{\SO(7)}{\SO(5)\times\SO(2)}, g_{\kil_{\so(7)}}\big)=1$ with multiplicity $21$ since it is attained at the adjoint representation of $\SO(7)$ (see \cite[Table A.2]{Urakawa86}).
Indeed, we note that $\lambda^{\pi_{\omega_1}}(\sqrt{12/5}, \sqrt{6/5}, \sqrt{4/5})=\lambda^{\pi_{\omega_2}}(\sqrt{12/5}, \sqrt{6/5}, \sqrt{4/5})=1$ and $\dim \pi_{\omega_1}+\dim\pi_{\omega_2}=7+14=21$, which implies that any eigenvalue of the Laplace-Beltrami operator of $(\Grass{2}{7},\bar g_{t})$ coming from $\pi\in\widehat G\smallsetminus\{1_G,\pi_{\omega_1},\pi_{\omega_2}\}$ is strictly greater than $\lambda_1(\Grass{2}{7},\bar g_{t})$, for any $t>0$.

\begin{theorem}
Any $G$-invariant metric on $\Grass{2}{7}$ isospectral to a symmetric metric on $\Grass{2}{7}$ is in fact isometric to such symmetric metric. 
\end{theorem}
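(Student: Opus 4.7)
My plan is to exploit three independent spectral invariants of an isospectral $g_r$: the two Laplace eigenvalues computed in Proposition~\ref{prop3:loweigenvalues}, together with the total volume. The discussion preceding the theorem already shows that $\lambda_1(\Grass{2}{7}, \bar g_t) = \tfrac{5}{2} t^2$ with multiplicity exactly $21 = 7 + 14$, contributed only by $\pi_{\omega_1}$ and $\pi_{\omega_2}$, so an isospectral $g_r$ must also satisfy $\lambda_1(\Grass{2}{7}, g_r) = \tfrac{5}{2} t^2$ with the same multiplicity $21$.

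The core step is then a multiplicity count that forces $\lambda^{\pi_{\omega_1}}(r) = \lambda^{\pi_{\omega_2}}(r) = \tfrac{5}{2} t^2$. Each $\pi \in \widehat G$ contributes to $\Spec(\Grass{2}{7}, g_r)$ in blocks of $\dim V_\pi$, and the irreducible representations of $\op{G}_2$ have dimensions $1, 7, 14, 27, 64, \dots$; therefore $21$ admits no decomposition as a sum of nontrivial dimensions other than $7 + 14$. Since $\dim V_{\pi_{\omega_1}}^H = \dim V_{\pi_{\omega_2}}^H = 1$, each of these representations contributes exactly one eigenvalue to $\Spec(\Grass{2}{7}, g_r)$, and both must equal $\tfrac{5}{2} t^2$. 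Using Proposition~\ref{prop3:loweigenvalues} this yields
\begin{equation*}
\tfrac{1}{3} r_1^2 + \tfrac{1}{6} r_2^2 = \tfrac{5}{2} t^2,
\qquad
\tfrac{1}{12} r_1^2 + \tfrac{1}{6} r_2^2 + \tfrac{3}{4} r_3^2 = \tfrac{5}{2} t^2,
\end{equation*}
and elementary elimination reduces this system to $r_1^2 = 3 r_3^2$ and $r_2^2 = 15 t^2 - 6 r_3^2$, leaving a single parameter $s := r_3^2 \in (0, \tfrac{5}{2} t^2)$.

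To close the remaining one-parameter freedom I would use the total volume, which is a spectral invariant coming from the leading heat-kernel coefficient. Since $\dim \fp_1 = \dim \fp_3 = 4$ and $\dim \fp_2 = 2$, the equality $\vol(g_r) = \vol(\bar g_t)$ becomes $r_1^4 r_2^2 r_3^4 = 432 t^{10}$, and substituting the one-parameter family gives $f(s) := 9 s^4 (15 t^2 - 6 s) = 432 t^{10}$. A direct calculation shows $f'(s) = 270 s^3 (2 t^2 - s)$, so $f$ attains its unique interior maximum on $(0, \tfrac{5}{2} t^2)$ at $s = 2 t^2$ with $f(2 t^2) = 432 t^{10}$; hence $s = 2 t^2$ is the unique solution, forcing $r = (\sqrt{6}\, t, \sqrt{3}\, t, \sqrt{2}\, t)$ and $g_r = \bar g_t$. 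The main obstacle is the multiplicity argument, whose success hinges on the arithmetic accident that no partition of $21$ into dimensions of nontrivial irreducible $\op{G}_2$-representations exists other than $7 + 14$; one also has the pleasant feature that $\bar g_t$ sits at the critical point of $f$, making uniqueness in the volume step a tangency rather than a generic intersection.
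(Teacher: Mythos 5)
Your argument is correct and follows essentially the same route as the paper: a multiplicity count on $\bar\lambda_1 = \tfrac52 t^2$ forces $\lambda^{\pi_{\omega_1}}(r)=\lambda^{\pi_{\omega_2}}(r)=\tfrac52 t^2$, which pins down $r_1^2=3r_3^2$ and $r_2^2 = 15t^2 - 6r_3^2$, and then the volume constraint $r_1^4 r_2^2 r_3^4 = 432\,t^{10}$ reduces to the same degree-5 polynomial equation in $r_3^2$ (the paper normalizes $t=1$, but after clearing factors your $9s^4(15t^2-6s)=432t^{10}$ is exactly their $x^5-\tfrac52 x^4+8=0$). The only cosmetic difference is that you observe the solution sits at the unique critical point (maximum) of the volume function on the admissible interval, while the paper exhibits the polynomial's root directly via its derivative; both are the same calculus argument.
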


\begin{proof}
Suppose that $\Spec(\Grass{2}{7},g_{r})=\Spec(\Grass{2}{7},\bar g_t)$ for some $r=(r_1,r_2,r_3)\in\R_{>0}^3$ and $t>0$. 
Without loosing generality, we can assume that $t=1$, that is, $\bar g_t=g_{r_0}$ with $r_0=(\sqrt{6},\sqrt{3},\sqrt{2})$. 
The goal is to show that $r=r_0$.

We mentioned above that the multiplicity of 
$
\bar \lambda_1:=
\lambda_1(\Grass{2}{7},\bar g_{1})
$ in $\Spec(\Grass{2}{7},\bar g_{1})$ is $21$. 
This implies that $\bar \lambda_1$ is in $\Spec(\Grass{2}{7},g_{r})$ with multiplicity $21$, thus \eqref{eq:spec} forces 
\begin{equation}\label{eq3:multiplicidad}
21=\sum_{\pi\in\widehat G_H:\,  \bar\lambda_1\in\Spec(\pi(-C_{g_r}))|_{V_\pi^H} }  \dim V_\pi\, a_\pi,
\end{equation}
where $a_\pi$ denotes the multiplicity of $\bar\lambda_1$ in $\Spec(\pi(-C_{g_r}))|_{V_\pi^H}$, so $0\leq a_\pi\leq \dim V_\pi^H$. 
One can easily check that the only irreducible representations of $G$ of dimension at most $21$ are $1_G$, $\pi_{\omega_1}$, and  $\pi_{\omega_2}$. 
We know that the eigenvalue associated to the trivial representation $1_G$ is $0$. 
Since $\dim V_{\pi_{\omega_1}}^H=\dim V_{\pi_{\omega_2}}^H=1$ (see the proof of Proposition~\ref{prop3:loweigenvalues}), $\dim V_{\pi_{\omega_1}}=7$, and $\dim V_{\pi_{\omega_2}}=14$, we conclude that $a_{\pi_{\omega_1}}=a_{\pi_{\omega_2}}=1$ and $a_{\pi}=0$ for all $\pi\in\widehat G\smallsetminus\{1_G,\pi_{\omega_1},\pi_{\omega_2}\}$. 

Hence 
$
\lambda^{\pi}(r)=\lambda^{\pi}(\sqrt6,\sqrt3,\sqrt2)
$ for $\pi=\pi_{\omega_1},\pi_{\omega_2}$, which is equivalent to 
\begin{equation}\label{eq3:igualdad-autovalores}
\begin{aligned}
\begin{cases}
\tfrac52 = \tfrac13 r_1^2 + \tfrac{1}{6}r_2^2 ,
\\
\tfrac52 =
	\tfrac{1}{12}r_1^2
	+ \tfrac{1}{6} r_2^2
	+ \tfrac{3}{4} r_3^2 ,
\end{cases}
\iff \qquad &
\begin{cases}
\tfrac52 = \tfrac13 r_1^2 + \tfrac{1}{6}r_2^2 ,
\\
r_1^2=3r_3^2.
\end{cases}
\end{aligned}
\end{equation}

We now analyze the volume, which is also determined by the spectrum. 
We have that 
\begin{equation}
\vol(\Grass{2}{7},g_r)
=r_1^{\dim\fp_1} r_2^{\dim\fp_2} r_3^{\dim\fp_3} \vol(\Grass{2}{7},g_{(1,1,1)})
=r_1^{4} r_2^{2} r_3^{4} \vol(\Grass{2}{7},g_{(1,1,1)}).
\end{equation}
Now, $\vol(\Grass{2}{7},g_r)=\vol(\Grass{2}{7},g_{r_0})$ yields $r_1^{4} r_2^{2} r_3^{4} 
=2^{4}3^3$. 
Substituting $r_1^2=3r_3^2$ in it, we obtain that
\begin{equation}\label{eq3:igualdadpto}
r_2^{2} r_3^{8} =2^{4}3 .
\end{equation}
By replacing $r_2^2=15-6r_3^2$ from the first row in \eqref{eq3:igualdad-autovalores} above, we get $0= r_3^{10}-\tfrac{5}{2} r_3^{8} +8$.

The polynomial $f(x):=x^5-\tfrac{5}{2}x^4+8$  satisfies $f(0)=8>0$, $f(2)=0$, $f'(x)=5x^3(x-2)$ and $\lim_{x\to\infty}f(x)=+\infty$.
It follows that $f(x)$ vanishes only at $x=2$ for $x>0$.
We conclude that $r_3=\sqrt{2}$, which implies $r=r_0$ by \eqref{eq3:igualdad-autovalores} and \eqref{eq3:igualdadpto}. 
\end{proof}

\subsection{Homogeneous Einstein metrics on $\Grass{2}{7}$}\label{subsec3:nu-stability}

The Einstein symmetric metric $g_{\kil_{\so(7)}}$ on $\frac{\SO(7)}{\SO(5)\times\SO(2)}$ is neutrally $\nu$-stable according to Cao and He~\cite{CaoHe15}. 
Actually, we mentioned before that $\lambda_1(\frac{\SO(7)}{\SO(5)\times\SO(2)}, g_{\kil_{\so(7)}})=1$ and its Einstein constant is $E=1/2$ by \cite[Prop.~7.93]{Besse}, thus $\lambda_1(\frac{\SO(7)}{\SO(5)\times\SO(2)}, g_{\kil_{\so(7)}})=2E$.

There are two additional (up to scaling) $G$-invariant Einstein metrics on $\Grass{2}{7}$ discovered in \cite{Arvanitoyeorgos93} and \cite{Kimura}; see also \cite[\S5]{Kerr96}.
These two metrics do not belong to a canonical variation of any of the two fibrations, which makes very difficult the calculations of their first eigenvalue.

Taking into account the dictionary of notations in Remark~\ref{rem4:notationMegan} between this article and \cite{Kerr96}, the additional two $G$-invariant Einstein metrics $g_1,g_2$ on $\Grass{2}{7}$ have approximate parameters $x=(x_1,x_2,x_3)$ and $r=(r_1,r_2,r_3)$ given by  
\begin{equation}
\renewcommand{\arraystretch}{1.2}
\begin{array}{ccc}
& g_1 & g_2
\\ \hline 
x_1&1&1
\\
x_2&0.597133339764792&5.35063404291744
\\
x_3&1.22554394913282&5.25152734929540
\\
r_1^2&13.3973427160359&1.49514990855887
\\
r_2^2&8& 8
\\
r_3^2&6.52771367820850&1.52336634047490
\end{array}
\end{equation}

By using the expression for the scalar curvature in \cite[page 164]{Kerr96}, we obtain that 
\begin{equation}
\begin{aligned}
\scal(\Grass{2}{7},g_1)&\approxeq 33.6319213085489,&
\scal(\Grass{2}{7},g_2)&\approxeq 7.25191745508143.
\end{aligned}
\end{equation}
Dividing by $\dim\Grass{2}{7}=10$, we obtain the Einstein constants $E_1\approxeq 3.36319213085489$ and $E_2\approxeq 0.725191745508143$ of $g_1$ and $g_2$ respectively.  

Although we do not have an explicit expression for the first Laplace eigenvalue of $g_1$ and $g_2$, we have that 
\begin{equation}
\begin{aligned}
\lambda_1(\Grass{2}{7},g_1)&\leq \lambda^{\pi_{\omega_1}}(r(g_1))\approxeq 5.79911423867862< 6.72638426170978  \approxeq 2E_1,
\end{aligned}
\end{equation}
thus the Einstein manifold $(\Grass{2}{7},g_1)$ is $\nu$-unstable. 

It is not possible to obtain the same consequence for the Einstein metric $g_2$ because 
\begin{equation}
\begin{aligned}
\lambda^{\pi_{\omega_1}}(r(g_2))&\approxeq 1.83171663618629 \quad \text{and}
&
\lambda^{\pi_{\omega_2}}(r(g_2))&\approxeq 2.60045391440275
\end{aligned}
\end{equation}
are both greater than $2E_2=1.45038349101629$.

\section{The case \texorpdfstring{$\Grass{3}{8}$}{Gr(3,8)}} 
\label{sec:case4}

In this section we consider the compact irreducible symmetric space $\Grass{3}{8}$ of oriented three-planes in $\R^8$.

\subsection{Root systems for $\Grass{3}{8}$}
We consider the compact Lie groups
\begin{equation*}
H:=\SO(4)\subset K:=\operatorname{G}_2\subset G=\Spin(7). 
\end{equation*}
The goal of this section is to describe these embeddings at the Lie algebra level $\fh\subset \fk\subset \fg$ and their complexifications $\fh_\C\subset \fk_\C \subset \fg_\C$. 
We will identify $\fg\equiv \so(7)=\{X\in\gl(7,\R): X^t+X=0\}$

We consider the maximal torus $T$ of $G$ such that its Lie algebra is given by
\begin{equation*}
\ft=\left\{ \diag\left( 
	\left(\begin{smallmatrix}
	0&\mi h_1\\ -\mi h_1&0
	\end{smallmatrix}\right)
	,
	\left(\begin{smallmatrix}
	0&\mi h_2\\ -\mi h_2&0
	\end{smallmatrix}\right)
	,
	\left(\begin{smallmatrix}
	0&\mi h_3\\ -\mi h_3&0
	\end{smallmatrix}\right)
	,0
\right) : h_1,h_2,h_3\in\mi \R
\right\}. 
\end{equation*}
Its complexification $\ft_\C$ has the same expression with $h_1,h_2,h_3\in\C$. 
For $i=1,2,3$, we define $H_i\in\ft_\C$ as above setting $h_i=1$ and $h_j=0$ for $j\neq i$. 
It follows that $\{H_1,H_2,H_3\}$ is a $\C$-basis of $\ft_\C$.
Let $\{\ee_1,\ee_2,\ee_3\}$ be its dual basis, that is, $\ee_i(H_j)=\delta_{i,j}$. 

One can check that $\kil_\fg(X,Y)=-5\tr(XY)$ for all $X,Y\in\fg$, where $\kil_\fg$ stands for the negative of the Killing form of $\fg$. 
We pick $\kil=\kil_\fg$ as the $\Ad(G)$-invariant inner product on $\fg$ fixed in Section~\ref{sec:preliminaries}. 

One has $\kil(H_i,H_j)=10\, \delta_{i,j}$. 
For $\nu=\sum_{i=1}^3 a_i\ee_i\in\ft_\C^*$, the element $u_\nu:=\frac1{10}\sum_{i=1}^{3} a_iH_i$ satisfies $\nu(H)=\kil(H,u_\nu)$ for all $H\in\ft_\C$.  
We extend $\kil|_{\ft_\C}$ to $\ft_\C^*$ by $\kil^*(\nu,\nu')=\kil(u_{\nu},u_{\nu'})$. 

We pick the standard Weyl chamber such that the positive root system is given by 
\begin{equation*}
\Phi^+(\fg_\C,\ft_\C)
=\left\{ \ee_i\pm\ee_j:1\leq i<j\leq 3 \right\}\cup\{\ee_1,\ee_2,\ee_3\}. 
\end{equation*}
The corresponding fundamental weights are $\omega_1=\ee_1$, $\omega_2=\ee_1+\ee_2$, and $\omega_3=\frac12(\ee_1+\ee_2+\ee_2)$. 
For $\alpha\in\Phi(\fg_\C,\ft_\C)$, set $H_{\alpha}=\frac{2u_\alpha}{\kil^*(\alpha,\alpha)}$. 
We have $H_{\pm\ee_i\pm \ee_j}=\pm H_i\pm H_j$ and $H_{\pm\ee_i}=\pm 2 H_i$. 

For $\alpha\in\Phi(\fg_\C,\ft_\C)$, we denote by $E_\alpha$ the element in $\fg_\C$ defined as in Example 2 in \cite[\S{}II.1]{Knapp-book-beyond}. 
We set $X_\alpha=c_\alpha E_\alpha$, where $c_{\ee_i\pm\ee_j}=\frac12$ and $c_{-\ee_i\pm\ee_j}=\frac12$ for $1\leq i<j\leq 3$, and $c_{\pm \ee_i}=\pm 1$ for $i=1,2,3$.  
That is, 
\begin{equation}
\begin{aligned}
X_{\ee_1-\ee_2}&=
\frac12 
\begin{pmatrix}
&&1&\mi\\
&&-\mi&1\\
-1&\mi\\
-\mi&-1\\
&&&&0_3\\
%&&&&&0\\
%&&&&&&0\\
\end{pmatrix},
&
X_{-\ee_1+\ee_2}&=
\frac{-1}2 
\begin{pmatrix}
&&1&-\mi\\
&&\mi&1\\
-1&-\mi\\
\mi&-1\\
&&&&0_3\\
%&&&&&0\\
%&&&&&&0\\
\end{pmatrix},
\\
X_{\ee_1+\ee_2}&=
\frac12 
\begin{pmatrix}
&&1&-\mi\\
&&-\mi&-1\\
-1&\mi\\
\mi&1\\
&&&&0_3\\
%&&&&&0\\
%&&&&&&0\\
\end{pmatrix},
&
X_{-\ee_1-\ee_2}&=
\frac{-1}2 
\begin{pmatrix}
&&1&\mi\\
&&\mi&-1\\
-1&-\mi\\
-\mi&1\\
&&&&0_3\\
%&&&&&0\\
%&&&&&&0\\
\end{pmatrix},
\\
X_{\ee_1} &=
\begin{pmatrix}
0&&&&1\\
&0&&&-\mi\\
&&0_3\\
%&&&0\\
%&&&&0\\
&&&0\\
-1&\mi&&&0\\
\end{pmatrix}
,
&
X_{-\ee_1} &=-
\begin{pmatrix}
0&&&&1\\
&0&&&\mi\\
&&0_3\\
%&&&0\\
%&&&&0\\
&&&0\\
-1&-\mi&&&0\\
\end{pmatrix},
\end{aligned}
\end{equation}
and the rest can be figured out by changing indexes. 
Here $0_3$ abbreviates the zero $3\times 3$ matrix. 

This particular choice makes a Chevalley basis, that is, $[X_\alpha, X_{-\alpha}]=H_{\alpha}$ for all $\alpha\in \Phi(\fg_\C,\ft_\C)$,  and 
$[X_{\alpha}, X_{\beta}]=\pm(m+1)\, X_{\alpha+\beta}$ with $m=\max\{a\in\Z: \beta-a\alpha\in \Phi(\fg_\C,\ft_\C)\}$, for all $\alpha,\beta\in \Phi(\fg_\C,\ft_\C)$ satisfying that $\alpha+\beta\neq0$.  

We are now in position to describe the embeddings $\fh_\C\subset \fk_\C\subset \fg_\C$. 
According to \cite[\S2.3]{LevasseurSmith}, we have 
\begin{equation}\label{eq4:k_c}
\fk_\C=\Span_\C\left\{
\begin{aligned}
\bar H_{\alpha_1}&:=H_1-H_2+2H_3, &
Y_{\pm\alpha_1}, &&
Y_{\pm(\alpha_1+\alpha_2)}, &&
Y_{\pm(3\alpha_1+\alpha_2)}, 
\\
\bar H_{\alpha_2}&:=H_2-H_3, &
Y_{\pm\alpha_2}, &&
Y_{\pm(2\alpha_1+\alpha_2)}, &&
Y_{\pm(3\alpha_1+2\alpha_2)}
\end{aligned}
\right\},
\end{equation}
where
\begin{align*}
Y_{\pm\alpha_1}&= X_{\pm(\ee_1-\ee_2)}+X_{\pm\ee_3},&
Y_{\pm\alpha_2}&= X_{\pm(\ee_2-\ee_3)},
\\
Y_{\pm(\alpha_1+\alpha_2)}&=-X_{\pm(\ee_1-\ee_3)}+X_{\pm\ee_2},&
Y_{\pm(2\alpha_1+\alpha_2)}&=-X_{\pm(\ee_2+\ee_3)}-X_{\pm\ee_1}, 
\\
Y_{\pm(3\alpha_1+\alpha_2)}&=-X_{\pm(\ee_1+\ee_3)}, &
Y_{\pm(3\alpha_1+2\alpha_2)}&=-X_{\pm(\ee_1+\ee_2)}.
\end{align*}
Here, $\alpha_1$ is the short simple root in  $\Phi(\fk_\C,(\ft\cap\fk)_\C)$ and $\alpha_2$ is the long one. 

Without loosing generality, we can assume that 
\begin{equation}\label{eq4:h_C}
\begin{aligned}
\fh_\C &
= \Span_\C \{\bar H_{\alpha_1}, Y_{\alpha_1}, Y_{-\alpha_1}\}
\stackrel{\perp}{\oplus} 
\Span_\C \{ 
\bar H_{3\alpha_1+2\alpha_2}:=
-H_1-H_2
, Y_{3\alpha_1+2\alpha_2},  Y_{-3\alpha_1-2\alpha_2} 
\}
.
\end{aligned}
\end{equation}

\subsection{Homogeneous metrics for $\Grass{3}{8}$}
\label{subsec4:nu-stability}
We first find the irreducible components of the isotropy representation of $G/H$. 

Since $\kil(\fg_\alpha,\fg_\beta)=0$ if $\alpha+\beta\neq0$, it follows immediately that the orthogonal complement $\fp_3$ of $\fh$ in $\fk$ with respect to $\kil$ satisfies
\begin{equation}
(\fp_3)_\C
=\Span_\C\left\{ 
Y_{\pm\alpha_2},
Y_{\pm(\alpha_1+\alpha_2)},
Y_{\pm(2\alpha_1+\alpha_2)},
Y_{\pm(3\alpha_1+\alpha_2)}
\right\}.
\end{equation}
Moreover, $\fp_3$ is irreducible as an $H$-module since $(\fp_3)_\C$ is equivalent to $\sigma_3\otimes\sigma_1$, where $\sigma_k$ denotes the irreducible representation of $\sll(2,\C)$ of dimension $k+1$. 

\begin{lemma}
The orthogonal complement of $\fk$ in $\fg$ (with respect to $\kil$) decomposes as irreducible $H$-modules as $\fp_1\oplus\fp_2$, where
\begin{equation}\label{eq:q=p1+p2}
\begin{aligned}
(\fp_1)_\C&
=\Span_\C\left\{ 
X_{\pm(\ee_2+\ee_3)}-\tfrac12 X_{\pm\ee_1},
X_{\pm(\ee_1-\ee_3)}+\tfrac12 X_{\pm\ee_2}
\right\},
\\
(\fp_2)_\C&
=\Span_\C\left\{ 
	H_1-H_2-H_3,\, 
	X_{\pm(\ee_1-\ee_2)}-\tfrac12 X_{\pm\ee_3}
\right\}.
\end{aligned}
\end{equation}
\end{lemma}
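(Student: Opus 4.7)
The plan is a three-step verification: (i) that $\fp_1 \oplus \fp_2$ equals the orthogonal complement of $\fk$ in $\fg$; (ii) that each $\fp_i$ is $H$-invariant; and (iii) that each $\fp_i$ is irreducible as an $H$-module.

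For step (i), the real dimensions match as $\dim\fg - \dim\fk = 21-14 = 7 = 4+3$, and since the listed generators of $(\fp_1)_\C$ and $(\fp_2)_\C$ sit in disjoint root-support, the sum is direct. To verify $\fp_i \perp \fk$, I would exploit that $\kil$ pairs $\ft_\C$ only with itself and $\fg_\alpha$ only with $\fg_{-\alpha}$, so each pairing reduces to at most two root-space terms. For instance,
\begin{equation*}
\kil\bigl(X_{\ee_2+\ee_3} - \tfrac12 X_{\ee_1},\; Y_{-(2\alpha_1+\alpha_2)}\bigr) = -\kil(X_{\ee_2+\ee_3}, X_{-\ee_2-\ee_3}) + \tfrac12 \kil(X_{\ee_1}, X_{-\ee_1}),
\end{equation*}
and the identity $\kil(X_\alpha, X_{-\alpha}) = \tfrac12 \kil(H_\alpha, H_\alpha)$ (deduced from $[X_\alpha, X_{-\alpha}] = H_\alpha$ and $\ad$-invariance of $\kil$) yields $10$ and $20$ respectively, cancelling out. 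The coefficient $-\tfrac12$ is engineered precisely to produce this cancellation. Analogous checks cover the remaining pairings, including $\kil(H_1 - H_2 - H_3, \bar H_{\alpha_1}) = 10 + 10 - 20 = 0$ and $\kil(H_1 - H_2 - H_3, \bar H_{3\alpha_1+2\alpha_2}) = -10 + 10 = 0$ for the toral vector in $\fp_2$.

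For step (ii), the Cartan part $\Span_\C\{\bar H_{\alpha_1}, \bar H_{3\alpha_1+2\alpha_2}\}$ acts diagonally: each listed generator is a simultaneous eigenvector because the roots paired in it restrict to equal functionals on this Cartan. Explicitly, the four generators of $(\fp_1)_\C$ have weights $(\pm 1, \pm 1)$ and the three of $(\fp_2)_\C$ have weights $(\pm 2, 0)$ and $(0,0)$. For the remaining $\fh_\C$-generators $Y_{\pm \alpha_1}, Y_{\pm(3\alpha_1+2\alpha_2)}$, I would compute the brackets with each basis vector using the Chevalley relation $[X_\alpha, X_\beta] = \pm(m+1) X_{\alpha+\beta}$. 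Many pairs vanish because $\alpha + \beta$ is not a root; the surviving ones produce two terms in a common root space, and the coefficient $-\tfrac12$ is chosen precisely so that contributions landing in root spaces contained in $(\fp_3)_\C \subset \fk_\C$ (e.g.\ $\fg_{\pm(\ee_1 + \ee_3)}$) cancel, leaving each $\fp_i$ closed.

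For step (iii), the weight data above identifies $(\fp_1)_\C$ as the irreducible $\sll(2,\C) \oplus \sll(2,\C)$-module $\sigma_1 \otimes \sigma_1$ (four corners $(\pm 1, \pm 1)$, each with multiplicity one) and $(\fp_2)_\C$ as $\sigma_2 \otimes \sigma_0$ (weights $-2, 0, 2$ for the first factor, trivial on the second). The main obstacle is step (ii): the Chevalley formula pins down the commutator only up to an overall sign, so the cancellations needed to keep $\fp_1, \fp_2$ stable must be verified with the concrete signs arising from the explicit matrix realizations of $X_\alpha$ provided earlier in the section. I would carry this out by direct matrix computation for one representative of each $H$-orbit among the basis vectors, then invoke the conjugate symmetry $X_\alpha \leftrightarrow X_{-\alpha}$ together with the reality of the compact form to reduce the case analysis and keep the explicit sign bookkeeping to a minimum.
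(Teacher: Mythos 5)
Your plan follows the same scheme as the paper: verify orthogonality of $\fp_1 \oplus \fp_2$ to $\fk$ by reducing each pairing to terms of the form $\kil(X_\alpha, X_{-\alpha})$, then establish $H$-invariance and read off the irreducible types $\sigma_1\otimes\sigma_1$ and $\sigma_2\otimes\sigma_0$ from the module structure. Your derivation of $\kil(X_\alpha,X_{-\alpha})$ via $\kil(X_\alpha,X_{-\alpha})=\tfrac12\kil(H_\alpha,H_\alpha)$ is a slight variant of the paper's use of $[X_\alpha,X_{-\alpha}]=\kil_\fg(X_\alpha,X_{-\alpha})\,u_\alpha$, but it lands on the same values $10$ and $20$. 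Your worry about the sign ambiguity of the Chevalley structure constants is correctly resolved by your own remark: the paper's explicit $7\times 7$ matrices for the $X_\alpha$ pin down every structure constant, so no genuine ambiguity remains, only bookkeeping. One structural point worth making explicit in a write-up: the weight multiset $(\pm1,\pm1)$ (resp.\ $(\pm2,0),(0,0)$) by itself does not certify that the span is $\fh_\C$-stable; it only identifies the isomorphism type once stability is known, which is exactly why your step (ii), the bracket computations with $Y_{\pm\alpha_1}$ and $Y_{\pm(3\alpha_1+2\alpha_2)}$, is the load-bearing step — the paper proves it by exhibiting, for $(\fp_2)_\C$, the explicit $Y_{\alpha_1}$-orbit $X_{-\ee_1+\ee_2}-\tfrac12 X_{-\ee_3} \mapsto H_1-H_2-H_3 \mapsto X_{\ee_1-\ee_2}-\tfrac12 X_{\ee_3}$, precisely what you propose to do.
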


\begin{proof}
By using that $[X_\alpha,X_{-\alpha}]= \kil_{\fg}(X_\alpha,X_{-\alpha})\, u_\alpha$ for all $\alpha\in\Phi(\fg_\C,\ft_\C)$ (see for instance \cite[Lem.~2.18(a)]{Knapp-book-beyond}), one can easily check that $\kil_\fg(X_{\ee_i\pm\ee_j}, X_{-(\ee_i\pm\ee_j)})=10$ for all $1\leq i<j\leq 3$, and $\kil_\fg(X_{\ee_i}, X_{-\ee_i})=20$ for all $1\leq i\leq 3$. 
This allows us to prove that $(\fp_1)_\C\oplus(\fp_2)_\C$ is orthogonal to $\fk$ by checking that every generator of $(\fp_1)_\C\oplus(\fp_2)_\C$ is orthogonal to every generator of $\fk$. 
For instance, 
\begin{equation}
\kil_\fg( X_{\ee_2+\ee_3}-\tfrac12 X_{\ee_1}, 
	Y_{-(2\alpha_1+\alpha_2)}
)
= -\kil_\fg( X_{\ee_2+\ee_3}, X_{-\ee_2-\ee_3} )
+ \tfrac12 \kil_\fg( X_{\ee_1}, X_{-\ee_1})
=0
. 
\end{equation}
The rest are very simple. 

We next obtain the decomposition of $(\fp_1)_\C\oplus(\fp_2)_\C$ as irreducible $H$-modules. 
Since $Y_{\pm(3\alpha_1+2\alpha_2)}$ acts trivially on $\fp_2$ and 
\begin{equation}
\begin{aligned} {}
[ Y_{\alpha_1}, X_{-\ee_1+\ee_2}-\tfrac{1}{2} X_{-\ee_3} ]&
=H_{\ee_1-\ee_2}+H_{\ee_3}
=H_1-H_2+2H_3,\\ {}
[Y_{\alpha_1}, H_1-H_2+2H_3]&
= -2(X_{\ee_1-\ee_2}-\tfrac12 X_{-\ee_3}) ,\\ {}
[Y_{\alpha_1},X_{\ee_1-\ee_2}-\tfrac12 X_{\ee_3}]&
= 0,
\end{aligned}
\end{equation}
it follows that $(\fp_2)_\C$ is an irreducible $\fh_\C$-submodule equivalent to $\sigma_2\otimes\sigma_0$. 
Similarly, one can see that $(\fp_1)_\C$ is an irreducible $\fh_\C$-submodule equivalent to $\sigma_1\otimes\sigma_1$. 
\end{proof}

Note that $\dim\fp_1=4$, $\dim\fp_2=3$, $\dim\fp_3=8$, so $\fp_1,\fp_2,\fp_3$ are pairwise non-equivalent as $H$-modules. 
Furthermore, the decomposition $\fp=\fp_1\oplus\fp_2\oplus\fp_3$ satisfies condition \eqref{eq:hipotesis} in Section~\ref{sec:preliminaries}. 
We conclude that every $G$-invariant metric on $\Grass{3}{8}=G/H$ is isometric to $g_r$ for some $r=(r_1,r_2,r_3)\in\R_{>0}^3$, which is induced by the  $\Ad(H)$-invariant inner product on $\fp$ given by 
\begin{equation}
\innerdots_r 
= \frac{1}{r_1^2} \kil|_{\fp_1} 
\oplus \frac{1}{r_2^2} \kil|_{\fp_2} 
\oplus \frac{1}{r_3^2} \kil|_{\fp_3}
.
\end{equation}

\subsection{Casimir scalars and the tricky term for $\Grass{3}{8}$}
The goal of this subsection is to provide explicit expressions for the three individual terms of $\pi(-C_{g_r})$ in \eqref{eq:pi(C_r)}, namely the Casimir operators $\pi(-\Cas_{\fg,\kil})$ and $\pi(-\Cas_{\fk,\kil|_{\fk}})$, and the tricky term $\Upsilon_\pi$. 

We have that the sum of positive roots is $2\rho_\fg=5\ee_1+3\ee_2+\ee_3$. 
If $\pi_{\Lambda}\in\widehat G$ has highest weight $\Lambda=\sum_{i=1}^3a_i\ee_i$, then $\pi_{\Lambda} (-\Cas_{\fg,\kil})$ acts on $V_{\pi_{\Lambda}}$ by the scalar 
\begin{equation}\label{eq4:lambda^pi_Lambda}
\lambda_{\kil_\fg}^{\pi_{\Lambda}}
= \kil_{\fg}^*(\Lambda_\pi,\Lambda_\pi+2\rho_\fg)
= \frac{1}{10}\big(
	a_1(a_1+5)
	+a_2(a_2+3)
	+a_3(a_3+1)\big)
. 
\end{equation}

We have that $\kil_{\fk}=\frac{4}{5}\kil_{\fg}|_\fk$ by \cite[page 37]{DAtriZiller}, thus 
$
\pi(-\Cas_{\fk,\kil_\fg|_{\fk}}) 
= \frac45 \, \pi(-\Cas_{\fk,\kil_\fk}) 
$
(see \cite[\S2.2]{LauretRodriguez-hearingsymm1}). 
By writing the simple roots in $\Phi(\fk_\C,\ft_\C)$ in the usual way $\alpha_1=\bar\ee_2-\bar \ee_3$ and $\alpha_2 =\bar\ee_1-2\bar\ee_2+\bar \ee_3$, one has the fundamental weights $\nu_1=\bar \ee_1-\bar\ee_3$,  $\nu_2=2\bar\ee_1-\bar\ee_2-\bar\ee_3$, and $2\rho_\fk=6\bar\ee_1-2\bar\ee_2-4\bar\ee_3$.
Since $\kil_\fk^*(\bar\ee_i,\bar\ee_j) =\frac{1}{24}\delta_{i,j}$, if $\tau_\nu\in\widehat K$ has highest weight $\nu=\sum_{i=1}^3b_i\bar\ee_i$, then  $\tau_{\nu}(-\Cas_{\fk,\kil})$ acts on $V_{\tau_{\nu}}$ by the scalar 
\begin{equation}\label{eq4:lambda^tau_mu}
\begin{aligned}
\lambda_{\kil}^{\tau_\nu}&= 
\frac{4}{5}\, \kil_{\fk}^*(\nu,\nu +2\rho_\fk)
=\frac{4}{5}\,\frac{1}{24}\Big(
	b_1(b_1+6)
	+b_2(b_2-2)+b_3(b_3-4)
\Big)
. 
\end{aligned}
\end{equation}

We next move to the tricky term. 
One can check that an orthonormal basis of $\fp_2$ is given by the three elements 
\begin{equation}
\begin{aligned}
X_1^{(2)}&:= \tfrac{1}{\sqrt{30}} \big((X_{\ee_1-\ee_2}-\tfrac12 X_{\ee_3}) - (X_{-(\ee_1-\ee_2)} -\tfrac12 X_{-\ee_3}) \big)
,
\\
X_2^{(2)}&:= \tfrac{\mi}{\sqrt{30}}  \big((X_{\ee_1-\ee_2}-\tfrac12 X_{\ee_3}) + (X_{-(\ee_1-\ee_2)} -\tfrac12 X_{-\ee_3}) \big)
,
\\
X_3^{(2)}&:=\tfrac{\mi}{\sqrt{30}}\big(H_1-H_2-H_3\big)
.
\end{aligned}
\end{equation}
Hence, for $\pi\in\widehat G$ and a weight vector $v$ of weight $\mu$ (i.e.\ $v\in V_\pi(\mu)$), we have that 
\begin{equation}\label{eq4:trickyterm}
\begin{aligned}
\Upsilon_\pi(v)&=
-\big(\pi(X_1^{(2)})^2 +\pi(X_2^{(2)})^2 +\pi(X_3^{(2)})^2 \big)\cdot v
\\ & 
= \frac{\mu(H_1-H_2-H_3)^2}{30}\, v
-\frac{1}{15} \left(\begin{array}{l}
	- 2\pi(X_{\ee_1-\ee_2}) \pi(X_{-(\ee_1-\ee_2)}) 
	+ \pi(H_{\ee_1-\ee_2})
	\\
	+ \pi(X_{\ee_1-\ee_2}) \pi(X_{-\ee_3})
	+ \pi(X_{\ee_3}) \pi(X_{-(\ee_1-\ee_2)}) 
	\\
	-\tfrac12 \pi(X_{\ee_3}) \pi(X_{-\ee_3}) 
	+\tfrac14 \pi(H_{\ee_3})
	\\
\end{array}\right)\cdot v
\\ & 
= \frac{1}{30}\Big(
\mu(H_1-H_2-H_3)^2
-\mu(2H_1-2H_2+H_3)
\Big)\, v
\\ & \quad 
+\frac{2}{15} \pi(X_{\ee_1-\ee_2}) \pi(X_{-(\ee_1-\ee_2)})\cdot v
%\\ & \quad 
+\frac{1}{30} \pi(X_{\ee_3}) \pi(X_{-\ee_3}) \cdot v
\\ & \quad 
-\frac{1}{15} \pi(X_{\ee_1-\ee_2}) \pi(X_{-\ee_3})\cdot v
-\frac{1}{15} \pi(X_{\ee_3}) \pi(X_{-(\ee_1-\ee_2)}) \cdot v
.
\end{aligned}
\end{equation}

\subsection{Presentations of the standard and the spin representation}\label{subsec4:presentation}

The strategy to calculate the tricky term $\Upsilon_\pi(v)$ for $v\in V_\pi^H$ is to work with a particular presentation of the representation $\pi:G=\Spin(7) \to \GL(V_\pi)$. 
For our purposes, it will be enough to consider the standard representation $\pi_{\omega_1}$ and the spin representation $\pi_{\omega_3}$, which have highest weights $\omega_1=\ee_1$ and $\omega_3=\tfrac12(\ee_1+\ee_2+\ee_3)$ respectively. 

We have $V_{\pi_{\omega_1}}=\C^7$, and the action is multiplication at the left. 
Its weights are $\PP(\pi_{\omega_1})=\{0,\pm\ee_1,\pm\ee_2,\pm\ee_3\}$. 
Let $\{e_1,\dots,e_7\}$ denote the canonical basis of $\C^7$. 
We set $u_{\ee_i}= e_{2i-1}-\mi e_{2i}$ and $u_{-\ee_i}= e_{2i-1}+\mi e_{2i}$ for any $i=1,2,3$, and $u_{0}= e_{7}$. 
It follows that $u_{\mu}$ is a weight vector of weight $\mu$ and $\{u_{\mu}:\mu\in\PP(\pi_{\omega_1})\}$ is a basis of $V_{\pi_{\omega_1}}$. 
Moreover, one can easily check that the non-trivial actions between an element $X_{\alpha}$ with $\alpha\in\Phi(\fg_\C,\ft_\C)$ and $u_{\mu}$ with $\mu\in\PP(\pi_{\omega_1})$ are in Table~\ref{table:standard}. 

\begin{table}
\caption{Representation table of the standard representation $\pi_{\omega_1}$ of $\fg_\C=\so(7,\C)$.} 
\label{table:standard}
$
\begin{aligned}
X_{\ee_i-\ee_j}\cdot u_{\ee_j}&=  u_{\ee_i},&\qquad
X_{\ee_i+\ee_j}\cdot u_{-\ee_j}&=  u_{\ee_i},&\qquad
X_{\ee_k}\cdot u_{0}&=  u_{\ee_k},
\\
X_{\ee_i-\ee_j}\cdot u_{-\ee_i}&=  -u_{-\ee_j},&
X_{\ee_i+\ee_j}\cdot u_{-\ee_i}&=  -u_{\ee_j},&
X_{\ee_k}\cdot u_{-\ee_k}&= -2\, u_0,
\\
X_{-\ee_i+\ee_j}\cdot u_{-\ee_j}&=  -u_{-\ee_i},&
X_{-\ee_i-\ee_j}\cdot u_{\ee_j}&=  -u_{-\ee_i},&
X_{-\ee_k}\cdot u_{0}&=  -u_{-\ee_k},
\\
X_{-\ee_i+\ee_j}\cdot u_{\ee_i}&=  u_{\ee_j},&
X_{-\ee_i-\ee_j}\cdot u_{\ee_i}&=  u_{-\ee_j},&
X_{-\ee_k}\cdot u_{\ee_k}&= 2\, u_0.
\\
\end{aligned}
$

\
%\smallskip

for every $1\leq i<j\leq 3$ and $1\leq k\leq 3$.

\end{table}

We now move to the spin representation $\pi_{\omega_3}$. 
There are several presentations of it (see e.g.\ \cite[Ch.~V, Problems 16--27]{Knapp-book-beyond}).
For shortness reasons, we will give a particular basis of weight vectors with the corresponding actions of the basis of $\fg_\C$. 
We have 
\begin{equation}
\PP(\pi_{\omega_3})= \{\tfrac12(\pm\ee_1\pm\ee_2\pm\ee_3)\} = \{\pm\omega_3\}\cup\{\pm(\omega_3-\ee_i):1\leq i\leq 3\}
,
\end{equation}
each of them with multiplicity one. 
One can check that there is a basis $\{v_{\mu}:\mu\in\PP(\pi_{\omega_3})\}$ of $V_{\pi_{\omega_3}}$, with $v_{\mu}\in V_{\pi_{\omega_3}}(\mu)$ for all $\mu$, such that the non-trivial elements of the form $X_{\alpha}\cdot v_{\mu}$ for $\alpha\in\Phi(\fg_\C,\ft_\C)$ are shown in Table~\ref{table:spin}. 
Note that $X_{\alpha}\cdot v_{\mu}\neq0$ if and only if $\alpha+\mu\in\PP(\pi_{\omega_3})$. 
Of course, $\pi_{\omega_3}(H)\cdot v_{\mu}= \mu(H)\, v_{\mu}$ for all $H\in\ft_\C$. 

\begin{remark}
In the notation of \cite[Ch.~V, Problems 19--27]{Knapp-book-beyond}, one has 
$v_{\omega_3}= z_{\emptyset}$, 
$v_{\omega_3-\ee_2}= z_{\{2\}}'$, 
$v_{\ee_3-\omega_3}= z_{\{1,2\}}$, 
$v_{\ee_1-\omega_3}= z_{\{2,3\}}$, 
$v_{\omega_3-\ee_1}= z_{\{1\}}'$, 
$v_{\omega_3-\ee_3}= z_{\{3\}}'$, 
$v_{\ee_2-\omega_3}= z_{\{1,3\}}$, 
$v_{-\omega_3}= z_{\{1,2,3\}}'$. 
%\begin{align*}
%v_{\omega_3}&= z_{\emptyset}, 
%&
%v_{\omega_3-\ee_2}&= z_{\{2\}}', 
%&
%v_{\ee_3-\omega_3}&= z_{\{1,2\}}, 
%&
%v_{\ee_1-\omega_3}&= z_{\{2,3\}}, 
%\\
%v_{\omega_3-\ee_1}&= z_{\{1\}}', 
%&
%v_{\omega_3-\ee_3}&= z_{\{3\}}', 
%&
%v_{\ee_2-\omega_3}&= z_{\{1,3\}}, 
%&
%v_{-\omega_3}&= z_{\{1,2,3\}}'. 
%\end{align*}
\end{remark}

\begin{table}
\caption{Representation table of the spin representation $\pi_{\omega_3}$ of $\fg_\C=\so(7,\C)$.} 
\label{table:spin}
$
\begin{aligned}
X_{\ee_1}\cdot v_{\omega_3-\ee_1}&= -2\, v_{\omega_3}, &
X_{\ee_2}\cdot v_{\omega_3-\ee_2}&= -2\, v_{\omega_3}, &
X_{\ee_3}\cdot v_{\omega_3-\ee_3}&= -2\, v_{\omega_3}, &
\\
X_{\ee_1}\cdot v_{\ee_3-\omega_3}&= -2\, v_{\omega_3-\ee_2}, &
X_{\ee_2}\cdot v_{\ee_3-\omega_3}&= 2\, v_{\omega_3-\ee_1}, &
X_{\ee_3}\cdot v_{\ee_2-\omega_3}&= 2\, v_{\omega_3-\ee_1}, &
\\
X_{\ee_1}\cdot v_{\ee_2-\omega_3}&= -2\, v_{\omega_3-\ee_3}, &
X_{\ee_2}\cdot v_{\ee_1-\omega_3}&= -2\, v_{\omega_3-\ee_3}, &
X_{\ee_3}\cdot v_{\ee_1-\omega_3}&= 2\, v_{\omega_3-\ee_2}, &
\\
X_{\ee_1}\cdot v_{-\omega_3}&= -2\, v_{\ee_1-\omega_3}, &
X_{\ee_2}\cdot v_{-\omega_3}&= 2\, v_{\ee_2-\omega_3}, &
X_{\ee_3}\cdot v_{-\omega_3}&= -2\, v_{\ee_3-\omega_3}, &
\\[2mm]
X_{-\ee_1}\cdot v_{\omega_3}&= -\tfrac12\, v_{\omega_3-\ee_1}, &
X_{-\ee_2}\cdot v_{\omega_3}&= -\tfrac12\, v_{\omega_3-\ee_2}, &
X_{-\ee_3}\cdot v_{\omega_3}&= -\tfrac12\, v_{\omega_3-\ee_3}, &
\\
X_{-\ee_1}\cdot v_{\omega_3-\ee_2}&= -\tfrac12\, v_{\ee_3-\omega_3}, &
X_{-\ee_2}\cdot v_{\omega_3-\ee_1}&= \tfrac12\, v_{\ee_3-\omega_3}, &
X_{-\ee_3}\cdot v_{\omega_3-\ee_1}&= \tfrac12\, v_{\ee_2-\omega_3}, &
\\
X_{-\ee_1}\cdot v_{\omega_3-\ee_3}&= -\tfrac12\, v_{\ee_2-\omega_3}, &
X_{-\ee_2}\cdot v_{\omega_3-\ee_3}&= -\tfrac12\, v_{\ee_1-\omega_3}, &
X_{-\ee_3}\cdot v_{\omega_3-\ee_2}&= \tfrac12\, v_{\ee_1-\omega_3}, &
\\
X_{-\ee_1}\cdot v_{\ee_1-\omega_3}&= -\tfrac12\, v_{-\omega_3}, &
X_{-\ee_2}\cdot v_{\ee_2-\omega_3}&= \tfrac12\, v_{-\omega_3}, &
X_{-\ee_3}\cdot v_{\ee_3-\omega_3}&= -\tfrac12\, v_{-\omega_3}, &
\\[2mm]
X_{\ee_1-\ee_2}\cdot v_{\omega_3-\ee_1}&= - v_{\omega_3-\ee_2}, &
X_{\ee_1-\ee_3}\cdot v_{\omega_3-\ee_1}&= - v_{\omega_3-\ee_3}, &
X_{\ee_2-\ee_3}\cdot v_{\omega_3-\ee_2}&= - v_{\omega_3-\ee_3}, &
\\
X_{\ee_1-\ee_2}\cdot v_{\ee_2-\omega_3}&= - v_{\ee_1-\omega_3}, &
X_{\ee_1-\ee_3}\cdot v_{\ee_3-\omega_3}&=  v_{\ee_1-\omega_3}, &
X_{\ee_2-\ee_3}\cdot v_{\ee_3-\omega_3}&= - v_{\ee_2-\omega_3}, &
\\[2mm]
X_{-\ee_1+\ee_2}\cdot v_{\omega_3-\ee_2}&= - v_{\omega_3-\ee_1}, &
X_{-\ee_1+\ee_3}\cdot v_{\omega_3-\ee_3}&= - v_{\omega_3-\ee_1}, &
X_{-\ee_2+\ee_3}\cdot v_{\omega_3-\ee_3}&= - v_{\omega_3-\ee_2}, &
\\
X_{-\ee_1+\ee_2}\cdot v_{\ee_1-\omega_3}&= - v_{\ee_2-\omega_3}, &
X_{-\ee_1+\ee_3}\cdot v_{\ee_1-\omega_3}&=  v_{\ee_3-\omega_3}, &
X_{-\ee_2+\ee_3}\cdot v_{\ee_2-\omega_3}&= - v_{\ee_3-\omega_3}, &
\\[2mm]
X_{\ee_1+\ee_2}\cdot v_{\ee_3-\omega_3}&= 4\, v_{\omega_3}, &
X_{\ee_1+\ee_3}\cdot v_{\ee_2-\omega_3}&= 4\, v_{\omega_3}, &
X_{\ee_2+\ee_3}\cdot v_{\ee_1-\omega_3}&= 4\, v_{\omega_3}, &
\\
X_{\ee_1+\ee_2}\cdot v_{-\omega_3}&= 4\, v_{\omega_3-\ee_3}, &
X_{\ee_1+\ee_3}\cdot v_{-\omega_3}&= -4\, v_{\omega_3-\ee_2}, &
X_{\ee_2+\ee_3}\cdot v_{-\omega_3}&= 4\, v_{\omega_3-\ee_1}, &
\\[2mm]
X_{-\ee_1-\ee_2}\cdot v_{\omega_3}&= \tfrac14\, v_{\ee_3-\omega_3}, &
X_{-\ee_1-\ee_3}\cdot v_{\omega_3}&= \tfrac14\, v_{\ee_2-\omega_3}, &
X_{-\ee_2-\ee_3}\cdot v_{\omega_3}&= \tfrac14\, v_{\ee_1-\omega_3}, &
\\
X_{-\ee_1-\ee_2}\cdot v_{\omega_3-\ee_3}&= \tfrac14\, v_{-\omega_3}, &
X_{-\ee_1-\ee_3}\cdot v_{\omega_3-\ee_2}&= -\tfrac14\, v_{-\omega_3}, &
X_{-\ee_2-\ee_3}\cdot v_{\omega_3-\ee_1}&= \tfrac14\, v_{-\omega_3}. &
\end{aligned}
$

\

\end{table}

\subsection{Some low Laplace eigenvalues of $\Grass{3}{8}$}
\label{subsec4:loweigenvalues}

The main goal of this section is to obtain the eigenvalues in $\Spec(\Grass{3}{8},g_{r})$ contributed via \eqref{eq:spec} by the irreducible representations $\pi_{\omega_3}$ and $\pi_{\omega_1+\omega_3}$ of $G=\Spin(7)$ with highest weights $\omega_3=\tfrac12(\ee_1+\ee_2+\ee_3)$ and $\omega_1+\omega_3=\tfrac12(3\ee_1+\ee_2+\ee_3)$ respectively. 
We can deal with both simultaneously because the decomposition $\pi_{\omega_1}\otimes \pi_{\omega_3} \simeq \pi_{\omega_1+\omega_3} \oplus \pi_{\omega_3}$, which ensures that there are $G$-invariant subspaces $V_{\pi_{\omega_1+\omega_3}}$ and $V_{\pi_{\omega_3}}$ of $V_{\pi_{\omega_1}}\otimes V_{\pi_{\omega_3}}$, which are irreducible as $G$-modules with highest weights $\omega_1+\omega_3$ and $\omega_3$ respectively, satisfying that
\begin{equation}\label{eq4:tensor-omega1-omega3}
V_{\pi_{\omega_1}}\otimes V_{\pi_{\omega_3}}
=V_{\pi_{\omega_1+\omega_3}} \oplus V_{\pi_{\omega_3}}
. 
\end{equation}

Of course, $\{u_{\mu}\otimes v_{\eta}: \mu\in\PP(\pi_{\omega_1}),\,  \eta\in\PP(\pi_{\omega_3})\}$ is a basis of $V_{\pi_{\omega_1}}\otimes V_{\pi_{\omega_3}}$. 
Remember the action of $X\in\fg_\C$ is given by $X\cdot u\otimes v=\big(\pi_{\omega_1}(X)\cdot u\big)\otimes v + u\otimes \big(\pi_{\omega_3}(X)\cdot v\big)$. 
Therefore, Tables~\ref{table:standard}--\ref{table:spin} allow us to compute $X_\alpha\cdot u_\mu\otimes v_\eta$ for every root $\alpha\in \Phi(\fg_\C,\ft_\C)$. 
Furthermore, it turns out that $u_{\mu}\otimes v_{\eta}$ is a weight vector of weight $\mu+\eta$, thus $H\cdot u_{\mu}\otimes v_{\eta}=(\mu+\eta)(H)\, u_{\mu}\otimes v_{\eta}$. 
Note that, unlike $\pi_{\omega_1}$ and $\pi_{\omega_3}$, there are weights of $\pi_{\omega_1}\otimes\pi_{\omega_3}$ with multiplicity greater than one.

\begin{lemma}\label{lem4:(Vomega1xVomega3)^H}
We have that 
$(V_{\pi_{\omega_1}}\otimes V_{\pi_{\omega_3}})^H=\Span_{\C}\{w_1, w_2\}$, where 
\begin{align*}
w_1&
=   u_{\ee_1}\otimes v_{-\omega_3}
- 4 u_{-\ee_2}\otimes v_{\omega_3-\ee_3}
+ 2 u_{\ee_2}\otimes v_{\ee_3-\omega_3}
+ 8 u_{-\ee_1}\otimes v_{\omega_3},
\\
w_2&
=  u_{\ee_3}\otimes v_{\ee_2-\omega_3}
- 2 u_{0}\otimes v_{\omega_3-\ee_1}
+ u_{0}\otimes v_{\ee_1-\omega_3}
+ 2 u_{-\ee_3}\otimes v_{\omega_3-\ee_2}
.
\end{align*}
\end{lemma}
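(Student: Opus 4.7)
The plan is to use that $H$ is connected, so $(V_{\pi_{\omega_1}}\otimes V_{\pi_{\omega_3}})^H$ equals the space of $\fh_\C$-invariant vectors. From \eqref{eq4:h_C}, $\fh_\C\cong \sll(2,\C)\oplus\sll(2,\C)$, with the two summands spanned by the triples $\{\bar H_{\alpha_1},Y_{\pm\alpha_1}\}$ and $\{\bar H_{3\alpha_1+2\alpha_2},Y_{\pm(3\alpha_1+2\alpha_2)}\}$. A standard consequence of $\sll(2,\C)$ theory is that an $\fh_\C$-invariant is exactly a vector annihilated by the Cartan $\C\bar H_{\alpha_1}\oplus \C\bar H_{3\alpha_1+2\alpha_2}$ and by the two raising operators $Y_{\alpha_1}$ and $Y_{3\alpha_1+2\alpha_2}$.

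First, I would enumerate the joint zero weight space $V_0$ of $\bar H_{\alpha_1}=H_1-H_2+2H_3$ and $\bar H_{3\alpha_1+2\alpha_2}=-H_1-H_2$ inside $V_{\pi_{\omega_1}}\otimes V_{\pi_{\omega_3}}$. Since the tensor products $u_\mu\otimes v_\eta$ form a weight basis, $V_0$ is spanned by those $u_\mu\otimes v_\eta$ for which $\mu+\eta$ lies on the line $\R(\ee_1-\ee_2-\ee_3)$, the intersection of the kernels of the two Cartan elements in $\ft_\C^*$. A case-by-case analysis, going through each $\mu\in\PP(\pi_{\omega_1})$ and solving for $\eta\in\PP(\pi_{\omega_3})$, yields exactly eight pairs; these are precisely the eight basis vectors that appear as summands of $w_1$ and $w_2$, so $\dim V_0=8$.

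Next, I would compute $Y_{\alpha_1}\cdot(u_\mu\otimes v_\eta)$ and $Y_{3\alpha_1+2\alpha_2}\cdot(u_\mu\otimes v_\eta)$ on each of these eight vectors using the Leibniz rule $X\cdot(u\otimes v)=(\pi_{\omega_1}(X)u)\otimes v+u\otimes(\pi_{\omega_3}(X)v)$ together with $Y_{\alpha_1}=X_{\ee_1-\ee_2}+X_{\ee_3}$ and $Y_{3\alpha_1+2\alpha_2}=-X_{\ee_1+\ee_2}$, reading off each summand from Tables~\ref{table:standard}--\ref{table:spin}. This expresses $Y_{\alpha_1}|_{V_0}$ and $Y_{3\alpha_1+2\alpha_2}|_{V_0}$ as linear maps into the weight spaces of $V_{\pi_{\omega_1}}\otimes V_{\pi_{\omega_3}}$ at weights on the lines $\R(\ee_1-\ee_2-\ee_3)+\alpha_1$ and $\R(\ee_1-\ee_2-\ee_3)+3\alpha_1+2\alpha_2$. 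Solving the resulting linear system for the common kernel identifies it with $\Span_\C\{w_1,w_2\}$; alternatively, one verifies directly the four cancellations $Y_{\alpha_1}\cdot w_i=0$ and $Y_{3\alpha_1+2\alpha_2}\cdot w_i=0$ for $i=1,2$, while linear independence of $\{w_1,w_2\}$ is immediate from their disjoint supports in the weight basis.

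The main obstacle is the volume of bookkeeping: each of the sixteen applications of $Y_{\alpha_1}$ or $Y_{3\alpha_1+2\alpha_2}$ produces up to four terms, and one must carefully track signs and the target weight in each summand. To confirm that no further invariants exist, one can either appeal to the decomposition \eqref{eq4:tensor-omega1-omega3}, where each irreducible summand is expected to contribute a one-dimensional $H$-invariant subspace giving total dimension two, or simply observe that once $w_1,w_2$ are exhibited and verified to be invariant, they form a two-dimensional subspace of the common kernel inside the eight-dimensional space $V_0$, and the linear-system solution in step three rules out any third independent invariant. The calculation itself is purely mechanical once the two representation tables are in hand.
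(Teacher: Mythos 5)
Your proposal is correct and follows essentially the same route as the paper: pass from $H$-invariance to $\fh_\C$-invariance, identify the eight-dimensional joint zero-weight space for $\fh_\C\cap\ft_\C$ inside $V_{\pi_{\omega_1}}\otimes V_{\pi_{\omega_3}}$, and then cut it down by the root-vector conditions using Tables~\ref{table:standard}--\ref{table:spin}. The one small streamlining you make is to check only the raising operators $Y_{\alpha_1}$, $Y_{3\alpha_1+2\alpha_2}$ (since a weight-zero vector killed by the raising operator of an $\sll(2,\C)$ factor in a finite-dimensional module is automatically killed by the lowering operator), whereas the paper imposes all four conditions $Y_{\pm\alpha_1}\cdot w=Y_{\pm(3\alpha_1+2\alpha_2)}\cdot w=0$; both lead to the same linear system and the same $\Span_\C\{w_1,w_2\}$.
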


\begin{proof}
We have that
\begin{multline}
(V_{\pi_{\omega_1}}\otimes V_{\pi_{\omega_3}})^{H}
=(V_{\pi_{\omega_1}}\otimes V_{\pi_{\omega_3}})^{\fh_\C}
\\ 
\subset 
(V_{\pi_{\omega_1}}\otimes V_{\pi_{\omega_3}})^{\fh_\C\cap\ft_\C}
=\bigoplus_{ \substack{
	\mu\in\mathcal P(\pi_{\omega_1}), \eta\in\mathcal P(\pi_{\omega_3})
	\\
	(\mu+\eta)(H)=0\;\forall\, H\in \fh_\C\cap\ft_\C
}}
(V_{\pi_{\omega_1}}\otimes V_{\pi_{\omega_3}})(\mu+\eta)
\\  
= (V_{\pi_{\omega_1}}\otimes V_{\pi_{\omega_3}}) \big(\tfrac12(\ee_1-\ee_2-\ee_3)\big) \oplus 
(V_{\pi_{\omega_1}}\otimes V_{\pi_{\omega_3}}) \big(\tfrac12(-\ee_1+\ee_2+\ee_3)\big)
\\ 
= 
\Span_{\C}\left\{ 
\begin{array}{l}
u_{\ee_1}\otimes v_{-\omega_3},\,
u_{-\ee_2}\otimes v_{\omega_3-\ee_3},\,
u_{\ee_2}\otimes v_{\ee_3-\omega_3},\,
u_{-\ee_1}\otimes v_{\omega_3},
\\ 
u_{\ee_3}\otimes v_{\ee_2-\omega_3},\,
u_{0}\otimes v_{\omega_3-\ee_1},\,
u_{0}\otimes v_{\ee_1-\omega_3},\,
4\, u_{-\ee_3}\otimes v_{\omega_3-\ee_2}
\end{array}
\right\}
.
\end{multline}
%\begin{equation}
%\begin{aligned}
%(V_{\pi_{\omega_1}}\otimes V_{\pi_{\omega_3}})^{H}&
%=(V_{\pi_{\omega_1}}\otimes V_{\pi_{\omega_3}})^{\fh_\C}
%\\ & 
%\subset 
%(V_{\pi_{\omega_1}}\otimes V_{\pi_{\omega_3}})^{\fh_\C\cap\ft_\C}
%=\bigoplus_{ \substack{
%	\mu\in\mathcal P(\pi_{\omega_1}), \eta\in\mathcal P(\pi_{\omega_3})
%	\\
%	(\mu+\eta)(H)=0\;\forall\, H\in \fh_\C\cap\ft_\C
%}}
%(V_{\pi_{\omega_1}}\otimes V_{\pi_{\omega_3}})(\mu+\eta)
%\\ & 
%= (V_{\pi_{\omega_1}}\otimes V_{\pi_{\omega_3}}) \big(\tfrac12(\ee_1-\ee_2-\ee_3)\big) \oplus 
%(V_{\pi_{\omega_1}}\otimes V_{\pi_{\omega_3}}) \big(\tfrac12(-\ee_1+\ee_2+\ee_3)\big)
%\\ &
%= 
%\Span_{\C}\left\{ 
%\begin{array}{l}
%u_{\ee_1}\otimes v_{-\omega_3},\,
%u_{-\ee_2}\otimes v_{\omega_3-\ee_3},\,
%u_{\ee_2}\otimes v_{\ee_3-\omega_3},\,
%u_{-\ee_1}\otimes v_{\omega_3},
%\\ 
%u_{\ee_3}\otimes v_{\ee_2-\omega_3},\,
%u_{0}\otimes v_{\omega_3-\ee_1},\,
%u_{0}\otimes v_{\ee_1-\omega_3},\,
%4\, u_{-\ee_3}\otimes v_{\omega_3-\ee_2}
%\end{array}
%\right\}
%.
%\end{aligned}
%\end{equation}
In the penultimate identity was used that $\{H_1-H_2+2H_3, H_1+H_2\}$ is a basis of $\fh_\C\cap\ft_\C$. 
The last identity follows by finding all ways to write $\pm\tfrac12(\ee_1-\ee_2-\ee_3)=\mu+\eta$ with $\mu\in\PP(\pi_{\omega_1})$ and $\eta\in\PP(\pi_{\omega_3})$. 

In order to determine $(V_{\pi_{\omega_1}}\otimes V_{\pi_{\omega_3}})^{H}$, it remains to find which $\C$-linear combinations of these 8 elements are vanished by the generators of $\fh_\C$ as in \eqref{eq4:h_C}, namely, $Y_{\pm\alpha_1}=X_{\pm(\ee_1-\ee_2)}+X_{\pm\ee_3}$ and $Y_{\pm(3\alpha_1+2\alpha_2)}=X_{\pm(\ee_1+\ee_2)}$. 
More precisely, we look for $a_{\pm\ee_i}$ for $i=1,2,3$ and $a_{0}^\pm$ in $\C$ such that the element
\begin{equation}
\begin{aligned}
w&:=
a_{\ee_1}\, u_{\ee_1}\otimes v_{-\omega_3}
+a_{-\ee_2}\, u_{-\ee_2}\otimes v_{\omega_3-\ee_3}
+a_{\ee_2}\, u_{\ee_2}\otimes v_{\ee_3-\omega_3}
+a_{-\ee_1}\, u_{-\ee_1}\otimes v_{\omega_3}
\\ &\quad
+a_{\ee_3}\, u_{\ee_3}\otimes v_{\ee_2-\omega_3}
+a_{0}^-\, u_{0}\otimes v_{\omega_3-\ee_1}
+a_{0}^+\, u_{0}\otimes v_{\ee_1-\omega_3}
+a_{-\ee_3}\,4\, u_{-\ee_3}\otimes v_{\omega_3-\ee_2}
\end{aligned}
\end{equation}
satisfies $X_{\pm(\ee_1+\ee_2)}\cdot w=0$ and  $(X_{\pm(\ee_1-\ee_2)}+X_{\pm\ee_3})\cdot w=0$. 
This long but straightforward procedure returns the conditions
$a_{-\ee_2}=-4a_{\ee_1}$,
$a_{\ee_2}=2 a_{\ee_1}$,
$a_{-\ee_1}= 8a_{\ee_1}$,
$a_{\ee_3} = a_{0}^+$,
$a_{-\ee_3}= 2a_{0}^+$, and  
$a_{0}^-= - 2a_{0}^+$, 
which completes the proof. 
\end{proof}

\begin{remark}
One can check via long calculations that $w_1+2w_2\in V_{\pi_{\omega_3}}^H$ and $-3w_1+8w_2\in V_{\pi_{\omega_1+\omega_3}}^H$, though it will not be necessary. 
\end{remark}

We are now ready to obtain explicit expressions for Casimir scalars and the tricky term. 
From \eqref{eq4:lambda^pi_Lambda}, it follows that 
\begin{align}
\lambda_{\kil}^{\pi_{\omega_3}}&=\frac{21}{40},&
\lambda_{\kil}^{\pi_{\omega_1+\omega_3}}&=\frac{49}{40}. 
\end{align}

The branching law from $G=\Spin(7)$ to $K=\operatorname{G}_2$ gives 
\begin{align}\label{eq4:branching-Spin(7)toG2}
\pi_{\omega_3}|_{K} &\simeq \tau_{0}\oplus\tau_{\nu_1},&
\pi_{\omega_1+\omega_3}|_{K} &\simeq \tau_{\nu_1} \oplus \tau_{\nu_2} \oplus \tau_{2\nu_1}
.
\end{align}
Similarly, the branching law from $K$ to $H$ of the irreducible components appeared above give
\begin{equation}\label{eq4:branching-G2toSO(4)}
\begin{aligned}
\tau_{0}|_H& 
=\sigma_0\otimes\sigma_0,\\
\tau_{\nu_1}|_H& 
=\sigma_1\otimes\sigma_1\oplus \sigma_2\otimes\sigma_0, \\
\tau_{\nu_2}|_H &
=\sigma_2\otimes\sigma_0\oplus \sigma_0\otimes\sigma_2 \oplus \sigma_3\otimes\sigma_1, \\ 
\tau_{2\nu_1}|_H &
= \sigma_0\otimes\sigma_0\oplus \sigma_1\otimes\sigma_1 \oplus \sigma_2\otimes\sigma_2 \oplus \sigma_3\otimes\sigma_1 \oplus \sigma_4\otimes\sigma_0.  
\end{aligned}
\end{equation}
\cite{Sage} calculates them as follows: 
\smallskip
\begin{lstlisting}
sage: G=WeylCharacterRing("B3", style="coroots")
sage: K=WeylCharacterRing("G2", style="coroots")
sage: H=WeylCharacterRing("A1xA1", style="coroots")
sage: b1=branching_rule(G,K,"miscellaneous")
sage: b2=branching_rule(K,H,"extended")
sage: omega=G.fundamental_weights()
sage: nu=K.fundamental_weights()
sage: 
sage: print("branching to K of G(omega3):")
sage: print(G(omega[3]).branch(K,rule=b1))
sage: print("branching to K of G(omega1+omega3):")
sage: print(G(omega[1]+omega[3]).branch(K,rule=b1))
sage: print("----")
sage: print("branching to H of K(0):")
sage: print(K(0*nu[1]).branch(H,rule=b2))
sage: print("branching to H of K(nu1):")
sage: print(K(nu[1]).branch(H,rule=b2))
sage: print("branching to H of K(nu2):")
sage: print(K(nu[2]).branch(H,rule=b2))
sage: print("branching to H of K(2nu1):")
sage: print(K(2*nu[1]).branch(H,rule=b2))
branching to K of G(omega3):
G2(0,0) + G2(1,0)
branching to K of G(omega1+omega3):
G2(1,0) + G2(0,1) + G2(2,0)
----
branching to H of K(0):
A1xA1(0,0)
branching to H of K(nu1):
A1xA1(1,1) + A1xA1(2,0)
branching to H of K(nu2):
A1xA1(2,0) + A1xA1(3,1) + A1xA1(0,2)
branching to H of K(2nu1):
A1xA1(0,0) + A1xA1(1,1) + A1xA1(2,2) + A1xA1(3,1) + A1xA1(4,0)
\end{lstlisting}
\smallskip
Here, for non-negative integers $a,b$, \texttt{G2(a,b)} and \texttt{A1xA1(a,b)} means in our notation $\tau_{a\nu_1+b\nu_2}$ and $\sigma_a\otimes \sigma_{b}$ respectively.

For any $\tau\in \widehat K$, $\dim V_\tau^H$ is the number of times that the trivial representation $\sigma_0\otimes\sigma_0$ of $H$ appears in $\tau|_H$. 
It follows immediately from \eqref{eq4:branching-Spin(7)toG2} and \eqref{eq4:branching-G2toSO(4)} that 
\begin{equation}\label{eq4:dim-pi_omega3^H=dim-pi_omega1+omega3^H=1}
\begin{aligned}
\dim V_{\pi_{\omega_3}}^H&
	= \dim V_{\tau_0}^H+\dim V_{\tau_{\nu_1}}^H=1
,\\ 
\dim V_{\pi_{\omega_1+\omega_3}}^H&
	= \dim V_{\tau_{\nu_1}}^H +\dim V_{\tau_{\nu_2}}^H + \dim V_{\tau_{2\nu_1}}^H
=1
.
\end{aligned}
\end{equation}
%$\dim V_{\pi_{\omega_3}}^H= \dim V_{\tau_0}^H$, $\dim V_{\pi_{\omega_1+\omega_3}}^H= V_{\tau_{2\nu_1}}^H$, and
%\begin{equation}\label{eq4:dim-pi_omega3^H=dim-pi_omega1+omega3^H=1}
%\dim V_{\pi_{\omega_3}}^H=\dim V_{\pi_{\omega_1+\omega_3}}^H=1,
%\end{equation}
Consequently, we are in the situation of Remark~\ref{rem:Upsilon-dimV_pi^H=1} for $\pi_{\omega_3}$ and $\pi_{\omega_1+\omega_3}$. 
Moreover, using \eqref{eq4:lambda^tau_mu}, we have that $\pi_{\omega_3}\big(-\Cas_{\fk,\kil|_{\fk}}\big)$ acts on $V_{\pi_{\omega_3}}^H$ and $V_{\pi_{\omega_1+\omega_3}}^H$ by multiplication by the scalars 
\begin{align}
\lambda_{\kil|_\fk}^{\tau_0}=0\text{ and }
\lambda_{\kil|_\fk}^{\tau_{2\nu_1}}=\frac{14}{15}
\text{ respectively.}
\end{align}

Long and tedious calculations using \eqref{eq4:trickyterm} give 
\begin{equation}
\Upsilon_{\pi_{\omega_1}\otimes\pi_{\omega_3}}(w_i)=\frac{9}{40}\,w_i\quad\text{for }i=1,2,
\end{equation} 
where $w_1$ and $w_2$ are as in Lemma~\ref{lem4:(Vomega1xVomega3)^H}. 
Equivalently,  $\Upsilon_{\pi_{\omega_1}\otimes\pi_{\omega_3}}|_{(V_{\pi_{\omega_1}}\otimes V_{\pi_{\omega_3}})^H}=\frac{9}{40}\, \Id_{(V_{\pi_{\omega_1}}\otimes V_{\pi_{\omega_3}})^H}$. 
Moreover, \eqref{eq4:tensor-omega1-omega3} forces $\Upsilon_{\pi_{\omega_1}}$ and $\Upsilon_{\pi_{\omega_3}}$ acts by the scalar $\frac{9}{40}$ on $V_{\pi_{\omega_1}}^H$ and $V_{\pi_{\omega_3}}^H$ respectively. 
In other words, $\upsilon^{\pi_{\omega_3}} = \upsilon^{\pi_{\omega_1+\omega_3}}= \frac{9}{40}$ in the notation of Remark~\ref{rem:Upsilon-dimV_pi^H=1}.

We are now ready to obtain explicit expressions for the eigenvalues $\lambda_1^{\pi_{\omega_3}}(r)$ and $\lambda_1^{\pi_{\omega_1+\omega_3}}(r)$ via the formula
$\lambda_1^{\pi}(r) 
= \big(\lambda_{\kil}^{\pi} -\upsilon^{\pi} -\lambda_{\kil|_\fk}^{\tau} \big)\, r_1^2
+ \upsilon^{\pi} \, r_2^2
+ \lambda_{\kil|_\fk}^{\tau} \, r_3^2
$ 
in \eqref{eq:threeterms} for any $r=(r_1,r_2,r_3)\in\R_{>0}^3$:
\begin{equation}\label{eq4:threeterms}
\begin{aligned}
\lambda_1^{\pi_{\omega_3}}(r) &
= \left(\frac{21}{40} -\frac{9}{40}  \right)\, r_1^2
+ \frac{9}{40} \, r_2^2
= \frac{3}{10} \, r_1^2
+ \frac{9}{40} \, r_2^2
,
\\
\lambda_1^{\pi_{\omega_1+\omega_3}}(r) &
= \left(\frac{49}{40} -\frac{9}{40} -\frac{14}{15} \right)\, r_1^2
+ \frac{9}{40} \, r_2^2
+ \frac{14}{15} \, r_3^2
= \frac{1}{15}\, r_1^2
+ \frac{9}{40} \, r_2^2
+ \frac{14}{15} \, r_3^2
.
\end{aligned}
\end{equation}

\subsection{Spectral uniqueness for $\Grass{3}{8}$}

We are now ready to prove Theorem~\ref{thm0:main} for $\Grass{3}{8}$, namely, every symmetric metric on $\Grass{3}{8}\simeq G/H$, with $G=\Spin(7)$ and $H=\SO(4)$, is spectrally unique within the space of $G$-invariant metrics on $\Grass{3}{8}$.

According to \cite[\S6]{Kerr96}, the symmetric metrics on $G/H\simeq \Grass{3}{8}$ are 
\begin{equation}
\left\{\bar g_t:= g_{(\sqrt{12} t,\sqrt{4} t,\sqrt{3} t)}:t>0 \right\}.
\end{equation}

\begin{remark}\label{rem4:notationMegan}
In the notation in \cite{Kerr96}, $\fp_1$ and $\fp_2$ are interchanged and $Q=10\, \kil_\fg$, so $x_1=\frac{10}{r_2^2}$, $x_2=\frac{10}{r_1^2}$, and $x_3=\frac{10}{r_3^2}$.
\end{remark}

The standard symmetric space $\big(\frac{\SO(8)}{\SO(5)\times\SO(3)}, g_{\kil_{\so(8)}}\big)$ is isometric to $\bar g_t$ for $t=\sqrt{5/18}$.
One has $\lambda_1\big(\frac{\SO(8)}{\SO(5)\times\SO(3)}, g_{\kil_{\so(8)}}\big)=\frac{5}{4}$; it is attained at the representation $\bigwedge^3\C^8$ of $\SO(8)$, so its multiplicity is $\binom{8}{3}=56$ (see \cite[Table A.2]{Urakawa86}).
Note that 
$\lambda^{\pi_{\omega_1}}(\sqrt{10/3}, \sqrt{10/9}, \sqrt{5/6})
=\lambda^{\pi_{\omega_1+\omega_3}}(\sqrt{10/3}, \sqrt{10/9}, \sqrt{5/6})=\frac{5}{4}$ 
and $\dim \pi_{\omega_1}+\dim\pi_{\omega_1+\omega_3}=7+49=56$, which implies that any eigenvalue of the Laplace-Beltrami operator of $(\Grass{3}{8},\bar g_{t})$ coming from $\pi\in\widehat G\smallsetminus\{1_G,\pi_{\omega_1},\pi_{\omega_1+\omega_3}\}$ is strictly greater than $\lambda_1(\Grass{3}{8},\bar g_{t})$, for any $t>0$.

\begin{theorem}
Any $G$-invariant metric on $\Grass{3}{8}\simeq G/H$ isospectral to a symmetric metric on $\Grass{3}{8}$ is in fact isometric to such symmetric metric. 
\end{theorem}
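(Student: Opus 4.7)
The plan is to mimic the argument for $\Grass{2}{7}$: use the multiplicity of the first Laplace eigenvalue of the symmetric metric together with the volume invariance of the spectrum to pin down $r$. Assuming $\Spec(\Grass{3}{8},g_r)=\Spec(\Grass{3}{8},\bar g_t)$, I rescale so $t=1$, giving $r_0=(\sqrt{12},2,\sqrt{3})$ and $\bar\lambda_1=\tfrac{9}{2}$. The preliminary discussion exhibits $\bar\lambda_1$ with multiplicity $56=\dim V_{\pi_{\omega_3}}+\dim V_{\pi_{\omega_1+\omega_3}}=8+48$, so the first step is to argue that this multiplicity forces exactly these two representations to contribute the eigenvalue $\tfrac{9}{2}$ to $\Spec(g_r)$, yielding the two equations $\lambda_1^{\pi_{\omega_3}}(r)=\tfrac{9}{2}$ and $\lambda_1^{\pi_{\omega_1+\omega_3}}(r)=\tfrac{9}{2}$.

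To justify this, by \eqref{eq:spec} the multiplicity identity reads $56=\sum_{\pi\in\widehat G}\dim V_\pi\cdot a_\pi$, with $0\le a_\pi\le\dim V_\pi^H$ counting occurrences of $\tfrac{9}{2}$ in $\Spec(\pi(-C_{g_r})|_{V_\pi^H})$. Only irreducible $\Spin(7)$-representations of dimension at most $56$ can have $a_\pi>0$, and those are exactly $1_G$, $\pi_{\omega_1}$, $\pi_{\omega_3}$, $\pi_{\omega_2}$, $\pi_{2\omega_1}$, $\pi_{2\omega_3}$, $\pi_{\omega_1+\omega_3}$, of dimensions $1,7,8,21,27,35,48$. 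Branching $\Spin(7)\downarrow\op{G}_2\downarrow\SO(4)$ (the first step via character theory or \cite{Sage}, the second via \eqref{eq4:branching-G2toSO(4)}) should give $\dim V_{\pi_{\omega_1}}^H=\dim V_{\pi_{\omega_2}}^H=0$, $\dim V_{\pi_{2\omega_1}}^H=1$, and $\dim V_{\pi_{2\omega_3}}^H=2$, alongside $\dim V_{\pi_{\omega_3}}^H=\dim V_{\pi_{\omega_1+\omega_3}}^H=1$ already computed. A direct check of $8a+27b+35c+48d=56$ with $a,b,d\in\{0,1\}$ and $c\in\{0,1,2\}$ then singles out $(a,b,c,d)=(1,0,0,1)$ as the unique solution.

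With the two eigenvalue equations, \eqref{eq4:threeterms} becomes $\tfrac{3}{10}r_1^2+\tfrac{9}{40}r_2^2=\tfrac{9}{2}$ and $\tfrac{1}{15}r_1^2+\tfrac{9}{40}r_2^2+\tfrac{14}{15}r_3^2=\tfrac{9}{2}$; their difference gives $r_1^2=4r_3^2$, and the first yields $r_2^2=20-\tfrac{16}{3}r_3^2$. The volume identity $r_1^4r_2^3r_3^8=12^2\cdot 2^3\cdot 3^4=93312$, combined with $r_1^2=4r_3^2$, reduces to $r_2^3r_3^{12}=18^3$, i.e., $r_2r_3^4=18$. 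Eliminating $r_2$ and setting $x=r_3^2$ produces
\[
4x^5-15x^4+243=0,
\]
which factors as $(x-3)^2(4x^3+9x^2+18x+27)$. Since the cubic has all positive coefficients, it is strictly positive for $x>0$, so $x=3$ is the only positive root, forcing $r_3^2=3$, $r_1^2=12$, $r_2^2=4$, and hence $r=r_0$. The main obstacle I anticipate is the enumeration in the second paragraph: one must compute $\dim V_\pi^H$ for all the low-dimensional $\Spin(7)$-irreducibles not already handled in the paper, and then verify that the multiplicity equation admits only the expected $\pi_{\omega_3}\oplus\pi_{\omega_1+\omega_3}$ as a solution; once that is settled, the rest is routine algebra.
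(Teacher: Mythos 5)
Your proof is correct and follows essentially the same route as the paper: isolate the multiplicity-$56$ eigenvalue, use it to force $\lambda_1^{\pi_{\omega_3}}(r)=\lambda_1^{\pi_{\omega_1+\omega_3}}(r)=\tfrac92$, and combine with the volume invariant. Your only genuine departure is the finish: instead of the paper's numerical analysis of $r_2(20-r_2^2)^2=2^9$, you eliminate $r_2$ and obtain $4x^5-15x^4+243=(x-3)^2(4x^3+9x^2+18x+27)$ in $x=r_3^2$, whose cubic factor has only positive coefficients, so $x=3$ is the unique positive root — a cleaner, fully algebraic conclusion.
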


\begin{proof}
Suppose that $\Spec(\Grass{3}{8},g_{r})=\Spec(\Grass{3}{8},\bar g_t)$ for some $r=(r_1,r_2,r_3)\in\R_{>0}^3$ and $t>0$. 
Without loosing generality, we can assume that $t=1$, that is, $\bar g_1=g_{r_0}$ with $r_0=(\sqrt{12},2,\sqrt{3})$. 
The goal is to show that $r=r_0$.

The multiplicity of 
$
\bar \lambda_1:=
\lambda_1(\Grass{3}{8},\bar g_{1})
$ in $\Spec(\Grass{3}{8},\bar g_{1})$ is $56$. 
Therefore $\bar \lambda_1$ is in $\Spec(\Grass{3}{8},g_{r})$ with multiplicity $56$, thus \eqref{eq:spec} forces 
\begin{equation}\label{eq4:multiplicidad}
56=\sum_{\pi\in\widehat G_H:\,  \bar\lambda_1\in\Spec(\pi(-C_{g_r}))|_{V_\pi^H} }  \dim V_\pi\, a_\pi,
\end{equation}
where $a_\pi$ denotes the multiplicity of $\bar\lambda_1$ in $\Spec(\pi(-C_{g_r}))|_{V_\pi^H}$, so $0\leq a_\pi\leq \dim V_\pi^H$. 
One can easily check that the only irreducible representations of $G$ of dimension at most $56$ with $\dim_{\pi}^H>0$ are $1_G$, $\pi_{2\omega_1}$, $\pi_{\omega_3}$, $\pi_{\omega_1+\omega_3}$, and $\pi_{2\omega_3}$. 
We know that the eigenvalue associated to the trivial representation $1_G$ is $0$. 
Furthermore, $\dim V_{\pi_{\omega_3}}^H=\dim V_{\pi_{\omega_1+\omega_3}}^H=1$ by \eqref{eq4:dim-pi_omega3^H=dim-pi_omega1+omega3^H=1} and one can check that $\dim V_{\pi_{2\omega_1}}^H=1$ and $\dim V_{\pi_{2\omega_3}}^H=2$. 
Now, equation \eqref{eq4:multiplicidad} becomes 
\begin{equation}
\begin{aligned}
56&
=\dim V_{\pi_{2\omega_1}}\, a_{\pi_{2\omega_1}}
+\dim V_{\pi_{\omega_3}}\, a_{\pi_{\omega_3}}
+\dim V_{\pi_{\omega_1+\omega_3}}\, a_{\pi_{\omega_1+\omega_3}}
+\dim V_{\pi_{2\omega_3}}\, a_{\pi_{2\omega_3}}
\\ &
=27 a_{\pi_{2\omega_1}}
+8 a_{\pi_{\omega_3}}
+48 a_{\pi_{\omega_1+\omega_3}}
+35 a_{\pi_{2\omega_3}}
\\ & 
\text{
	with $0\leq a_{\pi_{2\omega_1}},a_{\pi_{\omega_3}}, a_{\pi_{\omega_1+\omega_3}}\leq 1$ and $0\leq a_{\pi_{2\omega_3}}\leq 2$,
}
\end{aligned} 
\end{equation}
which clearly implies that $a_{\pi_{\omega_3}}= a_{\pi_{\omega_1+\omega_3}}=1$ and $a_{\pi_{2\omega_1}}=a_{\pi_{2\omega_3}}=0$. 
Hence 
\begin{equation}
\bar \lambda_1
=\lambda_1^{\pi}(\sqrt{12},2,\sqrt{3})
=\lambda_1^{\pi}(r)
\qquad\text{ for }\pi\in\{\pi_{\omega_3}, \pi_{\omega_1+\omega_3} \}
.
\end{equation}
By \eqref{eq4:threeterms}, we have that
\begin{equation}\label{eq4:igualdad-autovalores}
\begin{aligned}
\begin{cases}
\tfrac92 = \frac{3}{10} \, r_1^2 + \frac{9}{40} \, r_2^2, 
\\
\tfrac92 =	\frac{1}{15}\, r_1^2 + \frac{9}{40} \, r_2^2 + \frac{14}{15}\, r_3^2, 
\end{cases}
\iff \qquad &
\begin{cases}
\tfrac92 = \frac{3}{10} \, r_1^2 + \frac{9}{40} \, r_2^2, 
\\
r_1^2=4\, r_3^2.
\end{cases}
\end{aligned}
\end{equation}

We now analyze the volume, which is also determined by the spectra. 
We have that 
\begin{equation}
\vol(\Grass{3}{8},g_r)
=r_1^{\dim\fp_1} r_2^{\dim\fp_2} r_3^{\dim\fp_3} \vol(\Grass{3}{8},g_{(1,1,1)})
=r_1^{4} r_2^{3} r_3^{8} \vol(\Grass{3}{8},g_{(1,1,1)}).
\end{equation}
Now, since $\vol(\Grass{3}{8},g_r)=\vol(\Grass{3}{8},g_{r_0})$, we obtain $r_1^{4} r_2^{3} r_3^{8}=12^2\cdot 8\cdot 3^4=2^7\cdot 3^6$. 
Substituting $r_1^2=4r_3^2$ in it, we can assert that
\begin{equation}\label{eq4:igualdadpto}
r_2^{3}\,  r_3^{12} =2^{3}3^6 
\qquad\Longrightarrow\qquad
r_2\,  r_3^{4} =18
.
\end{equation}
%{\red \Huge continuar desde acá}
It follows form \eqref{eq4:igualdad-autovalores} that 
\begin{equation}\label{eq4:r_3^2}
r_3^2
=\frac{1}{4}r_1^2
=\frac{1}{4}\frac{10}{3} \left(\frac92-\frac{9}{40}r_2^2\right)
=\frac{3}{16}(20-r_2^2).
\end{equation}
By replacing this expression for $r_3^2$ in \eqref{eq4:igualdadpto}, we get 
$0= r_2\frac{9}{2^8}(20-r_2^2)^2-18 $, so 
$0= r_2(20-r_2^2)^2-2^9 $.

It is a simple calculus exercise to show that the polynomial $f(x):=x(20 - x)^2-2^9 $ has two positive roots: $x_1=2$ and $x_2\approxeq 5.44915345$.
The second one gives $r_2\approxeq  5.44915345$, and \eqref{eq4:r_3^2} implies $r_3^2\approxeq -1.817488<0$, which is not possible. 
We conclude that $r_2=2$, thus $r_3^2=3$ by \eqref{eq4:r_3^2} and $r_1^2=12$ by \eqref{eq4:igualdad-autovalores}, and the proof is complete. 
\end{proof}

\subsection{Homogeneous Einstein metrics on $\Grass{3}{8}$}
The Einstein symmetric metric $g_{\kil_{\so(8)}}$ on $\frac{\SO(8)}{\SO(5)\times\SO(3)}$ is $\nu$-stable according to Cao and He~\cite{CaoHe15}. 
Kerr proved in \cite{Kerr96} that there are (up to scaling) two additional $G$-invariant Einstein metrics $g_1,g_2$ on $\Grass{3}{8}$.
Their approximate parameters $x=(x_1,x_2,x_3)$ and $r=(r_1,r_2,r_3)$ (see Remark~\ref{rem4:notationMegan} for their relation) are as follows: 
\begin{equation}
\renewcommand{\arraystretch}{1.2}
\begin{array}{ccc}
& g_1 & g_2
\\ \hline 
x_1&0.902191989660862& 0.369813422882157
\\
x_2&0.425178535419486& 1.10029990844058
\\
x_3&1&1 
\\
r_1^2&23.5195316013163& 9.08843118434199
\\
r_2^2&11.0841152599449& 27.0406626186377
\\
r_3^2&10&10
\end{array}
\end{equation}

By using the formula for the scalar curvature in \cite[page 168]{Kerr96}, we obtain that 
\begin{equation}
\begin{aligned}
\scal(\Grass{3}{8},g_1)&\approxeq 75.1030942567225,& \scal(\Grass{3}{8},g_2)&\approxeq 68.5963932678592.
\end{aligned}
\end{equation}
Dividing by $\dim\Grass{3}{8}=15$, we obtain the Einstein constants $E_1\approxeq 5.00687295044817$ and $E_2\approxeq 4.57309288452394$ of $g_1$ and $g_2$ respectively.  

Although we do not have an explicit expression for the first Laplace eigenvalue of $g_1$ and $g_2$, we have that 
\begin{equation}
\begin{aligned}
\lambda_1(\Grass{3}{8},g_1)&\leq \lambda^{\pi_{\omega_3}}(r(g_1))\approxeq 9.54978541388250< 10.0137459008963 \approxeq 2E_1,
\\
\lambda_1(\Grass{3}{8},g_2)&\leq \lambda^{\pi_{\omega_3}}(r(g_2))\approxeq 8.81067844449609 < 9.14618576904789 \approxeq 2E_2.
\end{aligned}
\end{equation}
We conclude that the Einstein manifolds $(\Grass{3}{8},g_1)$ and $(\Grass{3}{8},g_2)$ are $\nu$-unstable, and therefore dynamically unstable.

\begin{remark}
With a similar strategy as in Subsection~\ref{subsec4:loweigenvalues}, the authors obtained that
\begin{align*}
\lambda_1^{\pi_{2\omega_1}}(r)&
= \frac{7}{15}r_1^2+\frac{14}{15}r_3^2
,
\\
\lambda_1^{\pi_{2\omega_3}}(r)&
=\frac7{30} r_1^2 + \frac9{30} r_2^2 + \frac{7}{15} r_3^2 
- \frac1{30} \sqrt{121r_1^4  + 81r_2^4 + 196r_3^4 - 90r_1^2r_2^2 - 92r_1^2r_3^2 - 180r_2^2r_3^2 }
,
\\
\lambda_2^{\pi_{2\omega_3}}(r)&
=\frac7{30} r_1^2 + \frac9{30} r_2^2 + \frac{7}{15} r_3^2 
+ \frac1{30} \sqrt{121r_1^4  + 81r_2^4 + 196r_3^4 - 90r_1^2r_2^2 - 92r_1^2r_3^2 - 180r_2^2r_3^2 }
.
\end{align*}
The case $\pi_{2\omega_3}$ was particularly hard because $\dim V_{\pi_{2\omega_3}}^H=2$. 
Moreover, the eigenbasis of $\pi_{2\omega_3}(-C_{g_r})|_{V_{\pi_{2\omega_3}}^H}$ depends on $r$. 
That is, there are eigenvectors in $V_{\pi_{2\omega_3}}^H$ of the Casimir operators $\pi_{2\omega_3}\big(-\Cas_{\fg,\kil}\big)$ and $\pi_{2\omega_3}\big(-\Cas_{\fk,\kil|_{\fk}}\big)$, that are not eigenvectors of $\Upsilon_{\pi_{2\omega_3}}$

The smallest positive eigenvalue $\lambda_1(\Grass{3}{8},g_{r})$ of the Laplace-Beltrami operator associated to $(\Grass{3}{8},g_{r})$ might be equal to $\min\{\lambda_1^{\pi_{\omega_3}}(r), \lambda_1^{\pi_{\omega_1+\omega_3}}(r) ,  \lambda_1^{\pi_{2\omega_3}}(r)\}$. 
To establish it, one has to show that $\lambda_i^\pi(r)\geq \min\{\lambda_1^{\pi_{\omega_3}}(r), \lambda_1^{\pi_{\omega_1+\omega_3}}(r) ,  \lambda_1^{\pi_{2\omega_3}}(r)\}$ for every $1\leq i\leq \dim V_\pi^H$ and $\pi\in\widehat G_H$. 
The difficult cases are those $\pi\in\widehat G$ satisfying $\dim V_\pi^H>1$ (e.g.\ $\pi_{2\omega_3}$), since it is not easy to determine $V_{\pi}^H$ and either the eigenbasis for $\pi(-C_{g_r})|_{V_\pi^H}$ due to the same reason explained in the previous paragraph. 
\end{remark}

\bibliographystyle{plain}

\end{document}